\newtheorem{defi}{Definition}[section]
\newtheorem{teo}[defi]{Theorem}
\newtheorem{pro}[defi]{Proposition}
\newtheorem{lemma}[defi]{Lemma}
\newtheorem{cor}[defi]{Corollary}
\begin{document}


\title{On a Fourth Order Lichnerowicz Type Equation Involving The
Paneitz-Branson Operator.}

\author{Ali Maalaoui$^{1}$} 
\addtocounter{footnote}{1}
\footnotetext{Department of Mathematics,
Rutgers University - Hill Center for the Mathematical Sciences
110 Frelinghuysen Rd., Piscataway 08854-8019 NJ, USA. E-mail address:
{\tt{maalaoui@math.rutgers.edu}}}

\maketitle

{\noindent\bf Abstract} In this paper, we study some fourth order singular critical equations of Lichnerowicz type involving the Paneitz-Branson operator, and we prove existence and non existence results under given assumptions.

\section{Introduction}

During the last years there have been effective studies of conformal
operators and their relative invariants due to their application in geometry
or mathematical physics. For instance the Yamabe problem played an essential
role in the evolution of the analytical and geometrical tools also it was
with crucial importance for the study of the Einstein-Hilbert functional
without forgetting the input in relativity for the study of the conformal
Einstein constraint equations (see \cite{Cho3},\cite{Cho2}). And In 1983,
Paneitz \cite{Pan} introduced a conformally fourth order operator defined on
4-dimensional Riemannian manifolds Branson \cite{Bra1} generalized the
definition to $n-$dimensional Riemannian manifolds, $n\geq 5$. He introduced
another geometric quantity that defines another conformal invariant for $n>3$
that is the $Q-curvature$ that behaves in a very similar way to the scalar
curvature. And its variation after a conformal change involves a fourth
order operator. That is the Paneitz-Branson Operator. One can think about
the $Q-curvature$ and the Branson-Paneitz operator like the scalar
curvature and the conformal laplacian. There was a lot of published work
concerning prescribing the $Q-$curvature where one can notice that the
conditions that we get are similar to the scalar curvature one modulo some
technical assumptions (see \cite{Dja2},\cite{Dja3},\cite{Ben}). One of the
issues that we meet while dealing with this operator, is the fact that there
is no maximum principle, thus getting good and effective estimate is not as
easy as for the conformal Laplacian. Many authors have studied the
positivity and coercivity of the Paneitz, one can consult \cite{Yang} or 
\cite{Heb2} for example. As interaction with mathematical physics we can see
the work of Choquet-Bruhat in \cite{Cho3} with the Conformal Laplacian,
where they study the scalar field equation that leads to a Lichnerowicz type
semi-linear PDE. In this work we attempt to study another action functional
as a proposal for a relativestic model since it is conformally invariant and
we will see a scalar-field perturbation of it. The study of such functional
leads to the resolution of a Lichnerowicz type equation but it is a fourth
order one, with the Paneitz operator as a differential part. So in this
paper we will investigate the existence of positive solutions under some
assumption that we will mention later to that equation and also we will give
a non-existence result.

\section{Preliminaries and Motivations }

Let $\left( M,g\right) $ be a $n-$dimensional closed compact manifold with $%
n\geq 3,$ recall that if $R_{g}$ is the scalar curvature then under the
conformal change $\widetilde{g}=u^{\frac{4}{n-2}}g$, one gets the following
relation relation the new curvature with the old one : 
\begin{equation}
-\Delta _{g}u+\frac{(n-2)}{4(n-1)}R_{g}u=R_{\widetilde{g}}u^{\frac{n+2}{n-2}%
},  \label{scal}
\end{equation}%
Let $-L_{g}u=-\Delta _{g}u+\frac{(n-2)}{4(n-1)}R_{g}u$ this operator is
called the conformal Laplacian to see more of its property one could check 
\cite{Lee}. Similarly if we consider the following quantity which is the
Branson $Q-curvature$ introduced in \cite{Bra1}, defined by 
\begin{eqnarray*}
Q &:&=\frac{n^{2}-4}{8n(n-1)^{2}}R^{2}-\frac{2}{(n-1)^{2}}\left\vert Ric-%
\frac{R}{n}g\right\vert ^{2}+\frac{1}{2(n-1)}\Delta R \\
&=&c_{n}R^{2}+d_{n}\left\vert Ric\right\vert ^{2}+\frac{1}{2(n-1)}\Delta R.
\end{eqnarray*}%
Then after a conformal change $\widetilde{g}=u^{\frac{4}{n-4}}g$ of the
metric, one gets 
\begin{equation}
Q_{\widetilde{g}}u^{\frac{n+4}{n-4}}=Pu  \label{PerQ}
\end{equation}%
where

\begin{equation*}
Pu:=\Delta _{g}^{2}u-div\left( \left( \frac{\left( n-2\right) ^{2}+4}{%
2\left( n-2\right) (n-1)}Rg-\frac{4}{n-2}Ric\right) du\right) +\frac{n-4}{2}%
Qu.
\end{equation*}%
We will set $P_{0}$ its differential part, that is 
\begin{equation*}
P_{0}u=\Delta _{g}^{2}u-div\left( \left( \frac{\left( n-2\right)
^{2}+4}{2\left( n-2\right) (n-1)}Rg-\frac{4}{n-2}Ric\right) du\right) ,
\end{equation*}%
One can see that if $g$ is an Einstein metric then $P$ is with constant
coefficient, 
\begin{equation*}
Pu=\Delta _{g}^{2}u+\frac{n^{2}-2n-4}{2n(n-1)}R\Delta _{g}u+\frac{\left(
n-4\right) (n^{2}-4)}{16n(n-1)^{2}}R^{2}u
\end{equation*}%
and satisfies the maximum principle since it can be written as a product of
two second order operator satisfying the maximum principle.

Remark that the natural space to work on for the prescribed scalar
curvature problem is $H^{1}\left( M\right) $ and thus (\ref{scal}) is a
critical semi-linear problem. Also the natural space for prescribing the
Branson $Q$-curvature is $H^{2}\left( M\right) $ and again (\ref{PerQ}) is a
critical problem since we are in the borderline of the Sobolev embeddings.

There was an extensive work concerning (\ref{scal}) to find a metric with
constant scalar curvature, which is a critical point of the Einstein-Hilbert
functional 
\begin{equation*}
F_{R}:g\longmapsto \frac{\int_{M}R_{g}}{V_{g}^{\frac{n-4}{n}}}
\end{equation*}%
restricted to the conformal Class of a given metric. Same thing can be
applied to the functional$\ $ 
\begin{equation*}
F_{Q}:g\longmapsto \frac{\int_{M}Q_{g}}{V_{g}^{\frac{n-4}{n}}},
\end{equation*}

For instance one could check that the functional is Reimannian and Einstein
metrics are critical points of this functional.

\bigskip There have been many proposal in relativity to replace the
Hilbert-Einstein total curvature functional with a conformally invariant
functional, like for instance the case of Bach relativity where the
functional is replaced by $\int_{M}|C_{g}|^{2}dv_{g}$ and $C$ is the Weyl
tensor (see \cite{Bra2}). In this case we will consider another proposal
consisting of the total Paneitz-Branson curvature $F_{Q}$. Therefore one
can think about a scalar field perturbation of the previous one, that is 
\begin{equation*}
F_{\psi }\left( g\right) =\int_{M}Q_{g}-\left\vert \nabla _{g}\psi
\right\vert ^{2}-V(\psi )d\mu _{g},
\end{equation*}%
this functional was studied in \cite{Cho 4} for the case Hilbert-Einstein
functional under conformal change where the authors try to solve a conformal
constraint for the Einstein scalar field equation also in \cite{Heb1} where
the author studies the problem from a variational point of view.

Now if we take a closer look to this functional, one can see that if we
restrict it to the conformal class of $g$ one have 
\begin{equation*}
F_{\psi }\left( u^{\frac{4}{n-4}}g\right) =\frac{1}{a_{n}}%
\int_{M}uPu-a_{n}\left\vert \nabla _{g}\Psi \right\vert ^{2}u^{2}d\mu _{g}.
\end{equation*}%
Where $a_{n}=\frac{n-4}{4},$ therefore, the associated Euler-Lagrange
equation to this problem is 
\begin{equation*}
P_{g,\psi }u=Pu-a_{n}\left\vert \nabla _{g}\Psi \right\vert ^{2}u=\left( 
\widetilde{Q}-\left\vert \nabla _{\widetilde{g}}\Psi \right\vert ^{2}\right)
u^{\frac{n+4}{n-4}},
\end{equation*}%
that is 
\begin{equation*}
P_{0}u+a_{n}\left( Q-\left\vert \nabla _{g}\Psi \right\vert ^{2}\right)
u=\left( \widetilde{Q}-\left\vert \nabla _{\widetilde{g}}\Psi \right\vert
^{2}\right) u^{\frac{n+4}{n-4}}
\end{equation*}%
and the constant 
\begin{equation*}
\mathcal{P}\left[ g,\Psi \right] =\inf_{u>0,u\in C^{\infty }(M)}\frac{1}{%
a_{n}}\frac{\int_{M}uPu-a_{n}\left\vert \nabla _{g}\Psi \right\vert ^{2}u^{2}%
}{\left( \int_{M}u^{\frac{2n}{n-4}}\right) ^{\frac{n-4}{n}}},
\end{equation*}%
is a conformal invariant.

Let us recall the following result about the coercivity of the
Paneitz-Branson operator.

\begin{teo}[\cite{Yang}]
Let $(M,g)$ be a closed Riemannian manifold of dimension at least $6$. If
the Yamabe invariant of $g$ is non-negative, then with respect to any
conformal metric of positive scalar curvature $P_{0}$ has a non-negative
first eigenvalue and $kerP$ $_{0}=\{constant\}.$ The last statement also
holds in dimension five, provided we assume the Yamabe metric has positive $%
Q $-curvature.
\end{teo}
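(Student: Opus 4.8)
The plan is to reduce the statement to the positivity of a quadratic form and then, via a Bochner identity, to a pointwise curvature inequality. Since $P_{0}$ is elliptic, formally self-adjoint and in divergence form with no zeroth order term, it annihilates the constants, so its first eigenvalue is automatically non-positive; the two claims together are therefore equivalent to the single assertion that $\mathcal{Q}(u):=\int_{M}u\,P_{0}u\,dv_{g}\ge 0$ for every $u\in H^{2}(M)$, with equality precisely when $u$ is constant. I would fix a conformal representative $g$ with $R_{g}>0$ (such metrics exist exactly when $Y(M,[g])>0$, which is the only non-vacuous case of the hypothesis, since $Y(M,[g])=0$ forces every conformal metric to have scalar curvature of mixed sign) and prove this inequality. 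Integrating by parts twice gives $\mathcal{Q}(u)=\int_{M}(\Delta_{g}u)^{2}+\int_{M}A_{g}(\nabla u,\nabla u)$ with $A_{g}=\frac{(n-2)^{2}+4}{2(n-2)(n-1)}R_{g}\,g-\frac{4}{n-2}Ric_{g}$, and using the Bochner formula $\int_{M}(\Delta_{g}u)^{2}=\int_{M}|\nabla^{2}u|^{2}+\int_{M}Ric_{g}(\nabla u,\nabla u)$ to eliminate the Ricci term one arrives at the identity
\[
\mathcal{Q}(u)=\frac{4}{n-2}\int_{M}|\nabla^{2}u|^{2}\,dv_{g}+\frac{n-6}{n-2}\int_{M}(\Delta_{g}u)^{2}\,dv_{g}+\frac{(n-2)^{2}+4}{2(n-2)(n-1)}\int_{M}R_{g}|\nabla u|^{2}\,dv_{g}.
\]

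For $n\ge 6$ all three coefficients are non-negative and $R_{g}>0$, so $\mathcal{Q}(u)\ge 0$ at once; and $\mathcal{Q}(u)=0$ forces $\nabla^{2}u\equiv 0$ (hence $|\nabla u|$ constant) together with $R_{g}|\nabla u|^{2}\equiv 0$, which, $R_{g}$ being positive everywhere, gives $\nabla u\equiv 0$. Thus the statement holds for $n\ge 6$, the hypothesis $Y(M,[g])\ge 0$ entering only to produce the conformal representative; in particular when $n=6$ the middle term drops out and the argument is the cleanest.

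In dimension five the coefficient $\frac{n-6}{n-2}=-\frac13$ is negative and the identity does not close by itself. Writing instead $Ric_{g}=E_{g}+\frac{R_{g}}{n}g$ with $E_{g}$ trace-free gives $\mathcal{Q}(u)=\int_{M}|\nabla^{2}u|^{2}-\frac13\int_{M}E_{g}(\nabla u,\nabla u)+\frac{19}{40}\int_{M}R_{g}|\nabla u|^{2}$, so the only dangerous term is $\int_{M}E_{g}(\nabla u,\nabla u)$, which is harmless once $|E_{g}|$ is pointwise small relative to $R_{g}$. This is exactly what the $Q$-curvature hypothesis supplies on the Yamabe representative $g_{Y}$: there $R_{g_{Y}}\equiv R_{0}$ is a positive constant, the definition of $Q$ reduces to $Q_{g_{Y}}=(c_{n}+\tfrac{d_{n}}{n})R_{0}^{2}+d_{n}|E_{g_{Y}}|^{2}$, and $Q_{g_{Y}}>0$ forces $|E_{g_{Y}}|^{2}\le\frac{n^{2}-20}{16n}R_{0}^{2}$, i.e. $|E_{g_{Y}}|\le\frac14 R_{0}$ when $n=5$. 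Feeding this into the last identity and using $\big|\int_{M}E_{g_{Y}}(\nabla u,\nabla u)\big|\le\int_{M}|E_{g_{Y}}|\,|\nabla u|^{2}$ gives $\mathcal{Q}^{g_{Y}}(u)\ge\int_{M}|\nabla^{2}u|^{2}+(\tfrac{19}{40}-\tfrac1{12})\int_{M}R_{0}|\nabla u|^{2}>0$ for every non-constant $u$, so $P_{0,g_{Y}}$ has the asserted spectral properties. To pass from $g_{Y}$ to an arbitrary conformal metric of positive scalar curvature one would invoke the conformal covariance of the \emph{full} Paneitz operator $P$ — equivalently, the conformal invariance of the quadratic form $f\mapsto\int_{M}fPf$ — together with $P=P_{0}+\frac{n-4}{2}Q$, keeping track of how the zeroth order term transforms.

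The step I expect to be the genuine obstacle is precisely this last transfer in dimension five: because $P_{0}$, unlike $P$, is not conformally covariant, changing the representative generates correction terms built from the conformal factor and its derivatives, and one must show these recombine so as not to spoil the positivity — and, in the borderline, so as to keep $\ker P_{0}$ equal to the constants. By contrast, for $n\ge 6$ the displayed identity makes the whole result essentially a pointwise consequence of $R_{g}>0$, with no conformal input needed beyond the existence of a positive-scalar-curvature representative.
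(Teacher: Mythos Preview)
The paper does not prove this theorem at all: it is quoted from Xu--Yang and stated with the citation \cite{Yang} in the theorem header, with no accompanying argument. There is therefore nothing in the paper against which to compare your sketch.

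On its own merits your outline is essentially the Xu--Yang argument. The Bochner rewriting
\[
\mathcal{Q}(u)=\frac{4}{n-2}\int_{M}|\nabla^{2}u|^{2}+\frac{n-6}{n-2}\int_{M}(\Delta u)^{2}+\frac{(n-2)^{2}+4}{2(n-2)(n-1)}\int_{M}R\,|\nabla u|^{2}
\]
and the observation that all three coefficients are non-negative once $n\ge 6$ and $R>0$ is exactly their mechanism, and your treatment of the equality case is correct. In dimension five there is a numerical slip: with the paper's normalization one has, on the Yamabe representative, $Q_{g_{Y}}=\frac{n^{2}-4}{8n(n-1)^{2}}R_{0}^{2}-\frac{2}{(n-1)^{2}}|E|^{2}$, so $Q_{g_{Y}}>0$ gives $|E|^{2}<\frac{n^{2}-4}{16n}R_{0}^{2}=\frac{21}{80}R_{0}^{2}$ rather than $\frac{1}{16}R_{0}^{2}$. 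This is harmless, since feeding the correct bound into your identity still leaves the coefficient $\frac{19}{40}-\frac{1}{3}\sqrt{21/80}>0$. You are also right to isolate the passage from $g_{Y}$ to an arbitrary conformal metric of positive scalar curvature in dimension five as the genuinely delicate step; in the present paper that issue is simply absorbed into the citation and not addressed.
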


\begin{pro}
Under the assumptions of \ the previous theorem, then the sign of $\mathcal{P%
}\left[ g,\Psi \right] $ is the sign of the first eigenvalue of the Operator 
$P_{\widetilde{g},\Psi }$ for every $\widetilde{g}$ in the same conformal
class.
\end{pro}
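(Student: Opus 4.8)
The plan is to use the conformal invariance of $\mathcal{P}$ to reduce to the background metric $g$, and then to compare $\mathcal{P}[g,\Psi]$ with the lowest eigenvalue $\lambda_{1}(P_{g,\Psi})$ of the self-adjoint fourth order operator $P_{g,\Psi}$ directly, one sign at a time. First I would record the conformal covariance of $P_{g,\Psi}$: for $\widetilde{g}=u_{0}^{4/(n-4)}g$ with $u_{0}\in C^{\infty}(M)$, $u_{0}>0$, the covariance of the Paneitz--Branson operator together with $|\nabla_{\widetilde{g}}\Psi|^{2}=u_{0}^{-4/(n-4)}|\nabla_{g}\Psi|^{2}$ gives $P_{\widetilde{g},\Psi}\varphi=u_{0}^{-(n+4)/(n-4)}P_{g,\Psi}(u_{0}\varphi)$ for all $\varphi$. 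Substituting $w=u_{0}\varphi$ and using $dv_{\widetilde{g}}=u_{0}^{2n/(n-4)}dv_{g}$, the Rayleigh quotient of $P_{\widetilde{g},\Psi}$ takes the form
\begin{equation*}
\frac{\int_{M}\varphi\,P_{\widetilde{g},\Psi}\varphi\,dv_{\widetilde{g}}}{\int_{M}\varphi^{2}\,dv_{\widetilde{g}}}=\frac{\int_{M}w\,P_{g,\Psi}w\,dv_{g}}{\int_{M}w^{2}\,u_{0}^{8/(n-4)}\,dv_{g}}.
\end{equation*}
The weight $u_{0}^{8/(n-4)}$ is bounded above and below by positive constants, so the infimum over $w\neq 0$ has the same sign as $\lambda_{1}(P_{g,\Psi})$; hence $\operatorname{sign}\lambda_{1}(P_{\widetilde{g},\Psi})=\operatorname{sign}\lambda_{1}(P_{g,\Psi})$ for every $\widetilde{g}$ in the conformal class. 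Since moreover $\mathcal{P}[\widetilde{g},\Psi]=\mathcal{P}[g,\Psi]$, it is enough to prove $\operatorname{sign}\mathcal{P}[g,\Psi]=\operatorname{sign}\lambda_{1}(P_{g,\Psi})$.

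Suppose first $\lambda_{1}:=\lambda_{1}(P_{g,\Psi})>0$. Ellipticity of $\Delta_{g}^{2}$ and interpolation yield a G{\aa}rding inequality $\int_{M}w\,P_{g,\Psi}w\,dv_{g}+C\|w\|_{L^{2}}^{2}\geq c\|w\|_{H^{2}}^{2}$; combining this with $\int_{M}w\,P_{g,\Psi}w\,dv_{g}\geq\lambda_{1}\|w\|_{L^{2}}^{2}$ gives coercivity $\int_{M}w\,P_{g,\Psi}w\,dv_{g}\geq c'\|w\|_{H^{2}}^{2}$ with $c'>0$, and then the Sobolev embedding $H^{2}(M)\hookrightarrow L^{2n/(n-4)}(M)$ gives $\int_{M}w\,P_{g,\Psi}w\,dv_{g}\geq c''\|w\|_{L^{2n/(n-4)}}^{2}$ for all $w$; restricting to smooth $w>0$ yields $\mathcal{P}[g,\Psi]\geq c''/a_{n}>0$. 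If $\lambda_{1}=0$, then $\int_{M}w\,P_{g,\Psi}w\,dv_{g}\geq 0$ for all $w$, in particular for all positive $w$, so $\mathcal{P}[g,\Psi]\geq 0$.

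Suppose now $\lambda_{1}\leq 0$. As $P_{g,\Psi}$ is self-adjoint, elliptic of order four on the closed manifold $M$, and (by the G{\aa}rding inequality) bounded below with compact resolvent, $\lambda_{1}$ is attained by an eigenfunction $\varphi_{1}$, which is smooth by elliptic regularity. The decisive step is to show that $\varphi_{1}$ does not change sign. A generic fourth order operator has no maximum principle, but the coercivity and positivity properties of $P_{0}$ provided by the previous theorem are exactly the conditions under which the Paneitz operator obeys a strong maximum principle, and they allow one to show that, for $c$ large enough that the zero order coefficient of $P_{g,\Psi}+c$ is positive, this operator has a positive Green's function; since $P_{g,\Psi}+c$ and $P_{g,\Psi}$ share the same eigenfunctions, the Krein--Rutman theorem then forces $\varphi_{1}$ to be, after a choice of sign, strictly positive on $M$. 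Using $u=\varphi_{1}>0$ as a competitor in the infimum defining $\mathcal{P}[g,\Psi]$ gives
\begin{equation*}
\mathcal{P}[g,\Psi]\leq\frac{1}{a_{n}}\,\frac{\int_{M}\varphi_{1}\,P_{g,\Psi}\varphi_{1}\,dv_{g}}{\big(\int_{M}\varphi_{1}^{2n/(n-4)}\,dv_{g}\big)^{(n-4)/n}}=\frac{\lambda_{1}\|\varphi_{1}\|_{L^{2}}^{2}}{a_{n}\|\varphi_{1}\|_{L^{2n/(n-4)}}^{2}}\leq 0,
\end{equation*}
with strict inequality if $\lambda_{1}<0$.

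Combining the two parts: $\lambda_{1}(P_{g,\Psi})>0$ implies $\mathcal{P}[g,\Psi]>0$; $\lambda_{1}(P_{g,\Psi})=0$ implies $0\leq\mathcal{P}[g,\Psi]\leq 0$, i.e.\ $\mathcal{P}[g,\Psi]=0$; and $\lambda_{1}(P_{g,\Psi})<0$ implies $\mathcal{P}[g,\Psi]<0$. As these cases are exhaustive the converse implications are automatic, so $\operatorname{sign}\mathcal{P}[g,\Psi]=\operatorname{sign}\lambda_{1}(P_{g,\Psi})=\operatorname{sign}\lambda_{1}(P_{\widetilde{g},\Psi})$ for every $\widetilde{g}$ in the conformal class, which is the claim. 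I expect the only real difficulty to be the sign-definiteness of the lowest eigenfunction in the third step: this is precisely where the absence of a fourth order maximum principle bites, and the unique place where the coercivity hypotheses of the previous theorem are genuinely needed, since a sign-changing minimizer of the eigenvalue problem could not be used as an admissible positive test function for $\mathcal{P}$.
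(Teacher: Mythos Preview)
Your argument is correct and rests on the same two ingredients as the paper's proof: plugging the first eigenfunction $\varphi_{1}$ into the quotient defining $\mathcal{P}[g,\Psi]$, and using coercivity together with the Sobolev embedding $H^{2}\hookrightarrow L^{2^{\sharp}}$ for the reverse inequality. The paper organizes the trichotomy by the sign of $\mathcal{P}$ rather than of $\lambda_{1}$ and is considerably terser---it takes both the positivity of $\varphi_{1}$ and the invariance of $\operatorname{sign}\lambda_{1}$ across the conformal class for granted, whereas you supply both (the first via a Krein--Rutman argument, the second via the covariance computation); these are genuine completions of gaps in the paper's write-up rather than a different route.
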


\begin{proof}
Assume $\mathcal{P}\left[ g,\Psi \right] >0$, then if we take $\varphi _{1}$
the first eigenvalue as a test function, one gets 
\begin{equation*}
\lambda _{1}\left\Vert \varphi _{1}\right\Vert _{2}^{2}=E(\varphi _{1})\geq 
\mathcal{P}\left[ g,\Psi \right] \left\Vert \varphi _{1}\right\Vert
_{2^{\sharp }}^{2}
\end{equation*}%
thus $\lambda _{1}>0.$ Now if we assume that $\mathcal{P}\left[ g,\Psi %
\right] =0$ then using the same argument we get that $\lambda _{1}\geq 0$ ,
but if $\lambda _{1}>0$ then using Sobolev inequalities we get $\mathcal{P}%
\left[ g,\Psi \right] >0$ which is not the case, thus $\lambda _{1}=0.$

Now if $\mathcal{P}\left[ g,\Psi \right] <0$ then there exist a function $v,$
such that $E(v)<0$ thus we get $\lambda _{1}<0.$
\end{proof}

From now on we will assume that the Yamabe and the Paneitz invariants are
positive and $P$ is positive, therefore we guaranty that the operator $%
P_{g,\Psi }$ is coercive and satisfies the maximum principle and up to a
conformal change we can assume that $Q_{\psi }=Q-\left\vert \nabla \psi
\right\vert _{g}^{2}$ id positive on $M.$

And if we follow the procedure of the Authors in \cite{Cho 4} to find the
Einstein-scalar field conformal constraint equation one gets a a
Lichnerowicz type problem but of fourth order of the following form :%
\begin{equation*}
\left\{ 
\begin{array}{c}
P_{g,\Psi }u=\frac{A(x)}{u^{2^{\sharp }+1}}-B(x)u^{2^{\sharp }-1} \\ 
u>0%
\end{array}%
\right. .
\end{equation*}%
where $2^{\sharp }=\frac{2n}{n-4}$, $A$ and $B$ two smooth functions.
Therefore, the object of the rest of this paper is to investigate the
existence of positive solutions to problem of the following form :%
\begin{equation*}
\left\{ 
\begin{array}{c}
P_{g,\Psi }u=\frac{A(x)}{u^{p}}-B(x)u^{q} \\ 
u>0%
\end{array}%
\right. ,
\end{equation*}%
where $p>1$ and $1<q\leq 2^{\sharp }-1.$

\section{Existence Via heat flow }

Let $E$ be a Banach space with norm $\left\Vert -\right\Vert _{E}$. $E$ is
partially ordered by a closed cone $P$ $\subset X,$ and we assume that it
has non-empty interior $\overset{\circ }{P}.$ We define also $\overset{%
\bullet }{P}=P-\left\{ 0\right\} $. The element of $\overset{\bullet }{P}$
are called positive and element of $-$ $\overset{\bullet }{P}$ are called
negative.

Now, if $u,v\in E$ we will use the following notations to distinguish how
they are comparable :

\begin{equation*}
\left\{ 
\begin{array}{c}
u\leq v\text{ if }v-u\in P \\ 
u<v\text{ if }v-u\in \overset{\bullet }{P} \\ 
u\ll v\text{ if }v-u\in \overset{\circ }{P}%
\end{array}%
\right. .
\end{equation*}%
A map $f,$ we set $D(f)\subset E$ its domain. Now a map $f:D(f)%
\longrightarrow E$ is said order preserving, if for every $u,v\in D(f)$ such
that $u\leq v$ then $f(u)\leq f(v).$

And we say that 
\begin{equation*}
\lim_{x\longrightarrow +\infty }f(x)=+\infty
\end{equation*}%
if for every $u\in P$ there exist $x\in P$ such that $f(v)\geq u,$ for every 
$v\geq x.$

And finally we define the set 
\begin{equation*}
\left[ u,v\right] =\left\{ w\in E;u\leq w\leq v\right\}
\end{equation*}%
and sometimes if needed for a set $D\subset E,$ 
\begin{equation*}
\left[ u,v\right] _{D}=\left\{ w\in D;u\leq w\leq v\right\} .
\end{equation*}

An OBS is said normal if there exist $\delta >0$ such that for every $u\leq
v $ in $E$, 
\begin{equation*}
\left\Vert u\right\Vert _{E}\leq \delta \left\Vert v\right\Vert _{E}.
\end{equation*}

\begin{teo}[Krein-Rutman]
Let $E$ be a total ordered Banach space and $T:E\longrightarrow E$ a compact
order preserving linear operator, then $r(T)$ is an eigenvalue with
eigenvector $u\in \overset{\bullet }{P}$ and if in adition we assume that $T$
is strongly order preserving (That is $Tu>>0$ if $u>0$) then $r(T)>0$ and is
a simple eigenvalue with positive eingen vector.
\end{teo}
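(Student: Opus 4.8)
This is the classical Krein--Rutman theorem, so the plan is to run the standard resolvent-plus-compactness argument, taking care to pinpoint where the order structure enters. First I would record the one structural property of the cone that is used: since $\overset{\circ}{P}\neq\emptyset$, fixing $e\in\overset{\circ}{P}$ with a ball $B(e,\delta)\subset P$, every $x\in E$ splits as $x=\tfrac{\|x\|}{\delta}e-\bigl(\tfrac{\|x\|}{\delta}e-x\bigr)$ with both summands in $P$ and of norm $\le C\|x\|$ (here $C=(\|e\|+\delta)/\delta$), so $P$ is generating with norm control. If $E$ were real I would complexify at the start; this changes neither $r(T)$ nor compactness nor the final eigenvalue problem, provided one checks the limiting eigenvector is real.

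For the first assertion I would assume $r:=r(T)>0$, deferring to the last step the proof that strong positivity forces this. For $\lambda>r$ the Neumann series gives $R(\lambda):=(\lambda I-T)^{-1}=\sum_{k\ge0}\lambda^{-k-1}T^{k}$, and since $T$ is order preserving each $T^{k}$ sends $P$ into $P$, so $R(\lambda)$ is a positive operator. The key input is the classical fact (a Pringsheim-type theorem for this series, whose coefficients $T^{k}$ are positive operators) that the spectral radius of a positive operator lies in its spectrum; hence $r\in\sigma(T)$, and combined with the general bound $\|R(\lambda)\|\ge\operatorname{dist}(\lambda,\sigma(T))^{-1}$ this gives $\|R(\lambda)\|\to\infty$ as $\lambda\downarrow r$. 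The splitting above yields $\sup\{\|R(\lambda)x\|:x\in P,\ \|x\|\le1\}\ge(2C)^{-1}\|R(\lambda)\|\to\infty$, so I can pick $\lambda_{n}\downarrow r$ and $x_{n}\in P$ with $\|x_{n}\|\le1$ and $t_{n}:=\|R(\lambda_{n})x_{n}\|\to\infty$. Setting $w_{n}:=t_{n}^{-1}R(\lambda_{n})x_{n}\in P$, one has $\|w_{n}\|=1$ and $(\lambda_{n}I-T)w_{n}=t_{n}^{-1}x_{n}\to0$. Compactness of $T$ gives $Tw_{n}\to z$ along a subsequence, so $\lambda_{n}w_{n}=Tw_{n}+o(1)\to z$ and hence $w_{n}\to r^{-1}z=:w$; then $\|w\|=1$, $w\in P$ (closed cone) and $Tw=\lim Tw_{n}=rw$, i.e. $w\in\overset{\bullet}{P}$ is an eigenvector for $r(T)$.

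Assuming now that $T$ is strongly order preserving, I would first check $r(T)>0$. For $u\in\overset{\bullet}{P}$ we have $Tu\gg0$, so $Tu-\eta\|u\|^{-1}u\in P$ for some $\eta>0$, i.e. $Tu\ge cu$ with $c>0$, and iterating $T^{k}u\ge c^{k}u$; choosing $\varphi\in E^{*}$ nonnegative on $P$ with $\varphi(u)>0$ (positive functionals separate $\overset{\bullet}{P}$ from $-P$ for a closed cone with interior) gives $c^{k}\varphi(u)\le\varphi(T^{k}u)\le\|T^{k}\|\,\|\varphi\|\,\|u\|$, so $r(T)\ge c>0$ as $k\to\infty$. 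Then the previous step produces $w\in\overset{\bullet}{P}$ with $Tw=rw$ and $r>0$, and $rw=Tw\gg0$ forces $w\gg0$, a strictly positive eigenvector. For simplicity I would take any real eigenvector $w'$ of $r$ and set $\tau=\inf\{s\in\mathbb{R}:w'\le sw\}$, finite because $w\in\overset{\circ}{P}$; the cone being closed gives $w'\le\tau w$, and if $\tau w-w'\ne0$ then $T(\tau w-w')\gg0$, i.e. $r(\tau w-w')\gg0$, so $\tau w-w'\in\overset{\circ}{P}$ and $w'\le(\tau-\varepsilon)w$ for small $\varepsilon>0$, contradicting minimality of $\tau$; hence $w'=\tau w$, so $\ker(rI-T)$ is one-dimensional. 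Algebraic simplicity would follow in the same spirit, running the first two steps on the compact operator $T^{*}$ with the dual cone to obtain a positive $\varphi^{*}$ with $T^{*}\varphi^{*}=r\varphi^{*}$ and $\varphi^{*}(w)>0$, and noting that $(T-rI)v=w$ would give $0=(T^{*}-rI)\varphi^{*}(v)=\varphi^{*}(w)>0$.

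The hard part is the first step: producing a \emph{positive} eigenvector, not merely an eigenvector in $E$. Everything downstream --- the compactness limit and the order comparison for simplicity --- is routine. The two places where positivity really does the work are that $r(T)$ itself (and not just some spectral value of the same modulus) belongs to $\sigma(T)$ because $T$ is positive, and that the non-empty interior of $P$ lets one transfer the resolvent blow-up from all of $E$ onto the cone; these are the steps I would be most careful about.
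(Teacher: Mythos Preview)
The paper does not prove this statement at all: it is quoted as the classical Krein--Rutman theorem and immediately used, with no argument supplied. So there is nothing in the paper to compare your proof against.

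That said, your write-up is a faithful rendition of the standard proof and is essentially correct. The two genuinely nontrivial inputs you flag --- that positivity of $T$ forces $r(T)\in\sigma(T)$ (the operator Pringsheim phenomenon), and that $\overset{\circ}{P}\neq\emptyset$ lets one transfer the resolvent blow-up from $E$ onto $P$ via the decomposition $x=\tfrac{\|x\|}{\delta}e-\bigl(\tfrac{\|x\|}{\delta}e-x\bigr)$ --- are exactly the right pressure points, and your compactness extraction of the eigenvector, the lower bound $r(T)\ge c>0$ from $Tu\ge cu$, and the minimal-$\tau$ comparison argument for simplicity are all standard and sound. One small remark: the Hahn--Banach step producing a positive functional with $\varphi(u)>0$ uses that $P$ is pointed ($P\cap(-P)=\{0\}$), which is implicit in ``partially ordered by a closed cone'' but worth stating; and for algebraic simplicity you would also need to rule out eigenvalues $\mu\neq r$ with $|\mu|=r$ admitting positive eigenvectors (immediate from the same comparison trick), though the paper's statement only asks for simplicity of the eigenvalue $r(T)$ itself.
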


Let us consider the following problem

\begin{equation}
\left\{ 
\begin{array}{c}
\frac{d}{dt}u+Au=F(u) \\ 
u(0)=u_{0}%
\end{array}%
\right.  \label{Prob H}
\end{equation}%
where $F:\overset{\circ }{P}\longrightarrow X$ $\ $is a $C^{1}$map , $A$ is
a densely defined compact resolvent positive operator and $u_{0}\in \overset{%
}{\overset{o}{P}\cap D(A)}.$

\begin{teo}
Assume that 
\begin{equation*}
\lim_{x\longrightarrow +\infty }F(x)=-\infty
\end{equation*}%
and 
\begin{equation*}
\lim_{x\longrightarrow 0^{+}}F(x)=+\infty
\end{equation*}%
and for every bounded set $K$ there exist a a constant $\lambda $ such that $%
F+\lambda I$ is order preserving in $K,$ then the problem admits a positive
solution.\newline
\end{teo}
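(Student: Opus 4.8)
The plan is to solve (\ref{Prob H}) by trapping the heat flow between an ordered pair of stationary sub- and supersolutions and then letting $t\to\infty$; the three hypotheses are tailored precisely so that each step of this classical Amann-type monotone scheme goes through, and as a by-product one obtains a positive equilibrium, i.e.\ a positive solution of $Au=F(u)$, which is the equation of real interest.

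First I would exploit that $A$ is positive with compact resolvent. Fix $\mu>0$ so large that $(\mu I+A)^{-1}$ exists, is compact, and is strongly positive (this last property being the abstract substitute for the maximum principle enjoyed by the Paneitz operator under the standing assumptions of this section); then the Krein--Rutman theorem quoted above yields a principal eigenvalue and eigenfunction, hence $\lambda_{1}>0$ (positive since $A$ is coercive) and $\phi_{1}\gg 0$ with $A\phi_{1}=\lambda_{1}\phi_{1}$.

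Next I would build $\overline{u}:=R\phi_{1}$ and $\underline{u}:=\varepsilon\phi_{1}$. For the supersolution, $A\overline{u}=R\lambda_{1}\phi_{1}$, while $\lim_{x\to+\infty}F(x)=-\infty$ in the sense defined above gives $F(R\phi_{1})\le 0\le R\lambda_{1}\phi_{1}$ for $R$ large, so $A\overline{u}\ge F(\overline{u})$; enlarging $R$ further also makes $u_{0}\le\overline{u}$ because $u_{0}\in\overset{\circ}{P}$ and $\phi_{1}\gg 0$. For the subsolution, $A\underline{u}=\varepsilon\lambda_{1}\phi_{1}\to 0$, whereas $\lim_{x\to 0^{+}}F(x)=+\infty$ forces $F(\varepsilon\phi_{1})\ge\lambda_{1}\phi_{1}\ge\varepsilon\lambda_{1}\phi_{1}$ for $\varepsilon$ small, so $A\underline{u}\le F(\underline{u})$; shrinking $\varepsilon$ further also gives $\underline{u}\le u_{0}$. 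Thus $\underline{u}\le u_{0}\le\overline{u}$ with $\underline{u},\overline{u}$ stationary sub- and supersolutions.

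Finally, set $K=[\underline{u},\overline{u}]$, choose $\lambda$ so that $G:=F+\lambda I$ is order preserving on $K$, and rewrite the equation as $\dot{u}+(A+\lambda)u=G(u)$; the semigroup $e^{-t(A+\lambda)}$ is (strongly) positive, so the mild formulation $u(t)=e^{-t(A+\lambda)}u_{0}+\int_{0}^{t}e^{-(t-s)(A+\lambda)}G(u(s))\,ds$ is monotone in $u_{0}$ and in $G$. The monotone iteration started from $\overline{u}$ (resp.\ $\underline{u}$) produces a sequence of solutions of linear Cauchy problems that, by comparison, stays in $K$, is monotone in the iteration index, and converges thanks to compactness of the resolvent together with normality of the cone; its limit is a solution of (\ref{Prob H}) with $\underline{u}\le u(t)\le\overline{u}$, hence $u(t)\gg 0$, and letting $t\to\infty$ along the trajectory produces a positive equilibrium. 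I expect the genuine difficulties to be $(i)$ checking, in the purely abstract ordered-Banach-space language, that $e^{-t(A+\lambda)}$ is positive and that the resulting comparison principle holds — this is exactly where positivity of $A$, the compact resolvent, and Krein--Rutman are needed — and $(ii)$ the near-origin construction of $\underline{u}$, where one must dominate $A\underline{u}$ by $F(\underline{u})$ even though $A$ of a positive element need not be positive; taking $\underline{u}$ proportional to $\phi_{1}$ is what resolves this.
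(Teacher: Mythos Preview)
Your proposal is correct and follows essentially the same route as the paper: stationary sub- and supersolutions of the form $s\phi$ with $\phi\gg 0$ (the paper takes a generic $e\in\overset{\circ}{P}$ rather than invoking Krein--Rutman to single out $\phi_{1}$, but this is cosmetic), then a monotone iteration of linear Cauchy problems driven by the shifted positive semigroup $e^{-t(A+\lambda)}$, convergence via compact resolvent, and finally passage $t\to\infty$ to the steady state. The only stylistic difference is that the paper iterates upward from the subsolution $u_{1}$ whereas you iterate from both barriers, and the paper leaves the verification that $se$ gives sub/supersolutions to the reader while you spell it out.
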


In the applications we can know more about the solution and we will deal
with that further in this paper.

\begin{proof}
First remark that there exist $u_{1}$ and $u_{2}$ such that 
\begin{eqnarray*}
\frac{d}{dt}u_{1}+Au_{1} &\leq &F(u_{1}) \\
u_{1}(0) &\leq &u_{0}
\end{eqnarray*}%
and 
\begin{eqnarray*}
\frac{d}{dt}u_{2}+Au_{2} &\geq &F(u_{2}) \\
u_{2}(0) &\geq &u_{0}
\end{eqnarray*}

In fact $u_{1}$ and $u_{2}$ could be chosen of the form $se$ where $e\in 
\overset{\circ }{P}$ and $s>0.$

set 
\begin{equation*}
K=\left\{ u_{1}\leq u\leq u_{2}\right\}
\end{equation*}%
then $K$ is a bounded set, so there exist $\lambda >0$ so that $F+\lambda I$
is order preserving on $K$ so let $\widetilde{A}$ and $\widetilde{F}$ denote
respectively $A+\lambda I$ and $F+\lambda I.$

Now let us construct the following sequence : $u^{1}$ being the unique
solution of 
\begin{equation*}
\frac{d}{dt}u+\widetilde{A}u=\widetilde{F}(u_{1})
\end{equation*}%
and $u^{k+1}$ is the unique solution of 
\begin{equation*}
\frac{d}{dt}u+\widetilde{A}u=\widetilde{F}(u^{k})
\end{equation*}

By induction one can easily show that the sequence $\left( u^{k}\right) $ is
monotone non-decreasing and $u^{k}\in K,$ $\forall k\geq 1.$ Let us show the
first step, that is $u^{1}\geq u_{1}.$

First using the assumptions on $A$ we have the existence of a compact
positive semi-group $S(t),$ generated by $\widetilde{A}$ (see \cite{Paz}).
So we have 
\begin{eqnarray*}
u_{1} &=&S(t)u_{1}(0)+\int_{0}^{t}S(t-s)\left( \frac{d}{dt}u_{1}+\widetilde{A%
}u_{1}\right) \\
&\leq &S(t)u_{0}+\int_{0}^{t}S(t-s)\widetilde{F}(u_{1}(s))ds \\
&\leq &u^{1}
\end{eqnarray*}%
Now since $A$ has compact resolvent and $K$ is bounded we can extract for
fixed time a subsequence that we will call also $\left( u^{k}\right) $ such
that $S(t)u^{k}(s)$ converges to $S(t)u(s),$ thus by writing 
\begin{equation*}
u^{k+1}=S(t)u_{0}+\int_{0}^{t}S(t-s)\widetilde{F}(u^{k}),
\end{equation*}%
one can see that $u$ satisfies 
\begin{equation*}
u=S(t)u_{0}+\int_{0}^{t}S(t-s)\widetilde{F}(u)
\end{equation*}%
And this gives a positive solution to (\ref{Prob H}).

Now notice that $u(t)$ is bounded in $D(A)$ thus there exists a sequence $%
\left( t_{k}\right) _{k}$ going to infinity such that $u(t_{k})$ converges
to some $\widetilde{u},$ and in fact the convergence occurs in $D(A)$. Thus
knowing that $\lim \int_{0}^{t}S(t-s)xds=\left( -A\right) ^{-1}x$ we get by
passing to the limit that $\widetilde{u}$ is a solution of the steady-state
problem.
\end{proof}

Now we will consider a problem of the form 
\begin{equation}
\left\{ 
\begin{array}{c}
u_{t}+P_{g,\Psi }u=f(x,u) \\ 
u(0)=u_{0}%
\end{array}%
\right.  \label{prob es}
\end{equation}%
where $P$ is the Paneitz-Branson operator and $f:M\times \mathbb{R}_{+}^{\ast }\longrightarrow \mathbb{R}$ is $C^{1}$ such that $\lim_{x\longrightarrow +\infty }f(x,t)=-\infty $
uniformly on $x$ and $\lim_{x\longrightarrow 0}f(x,t)=+\infty $ uniformly on 
$x,$ Then for every $u_{0}$ smooth and positive, there exist a positive
solution to (\ref{prob es}) moreover there exist a sequence $t_{k}$ going to
infinity such that $u(t_{k})\longrightarrow \widetilde{u}$ a solution of the
steady-state problem.

One also can write the problem as an integral equation using the positivity
preserving flow like in \cite{Ras}.

\begin{cor}
Take $A$ and $B$ two positive functions defined on $M$ and consider the
singular problem :%
\begin{equation}
\left\{ 
\begin{array}{c}
Pu=\frac{A(x)}{u^{p}}-B(x)u^{q} \\ 
u>0%
\end{array}%
\right.   \label{Prob A}
\end{equation}%
then using Theorem (3.2) we have the existence of a solution more than that,
it is the unique solution.
\end{cor}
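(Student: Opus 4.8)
The plan is to read (\ref{Prob A}) as the stationary problem for the parabolic equation (\ref{prob es}) and to invoke Theorem 3.2. Here $P=P_{g,\Psi}$ is the coercive, positivity-preserving operator of Section 2, which under the standing hypotheses satisfies the maximum principle; by elliptic regularity on the closed manifold $M$ it is densely defined with compact resolvent on a suitable ordered Banach space $E$ (say $C(M)$, or a H\"older space to have sectoriality), with cone $P=\{u\ge0\}$ and $\overset{\circ}{P}=\{u:\min_{M}u>0\}$. I would take $F$ to be the Nemytskii operator of $f(x,t)=A(x)t^{-p}-B(x)t^{q}$; since $A,B$ are smooth and positive, $F$ is a $C^{1}$ map from $\overset{\circ}{P}$ into $E$, so the abstract framework of Theorem 3.2 is in force once the three structural conditions on $F$ are verified.

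Those conditions are immediate from the shape of $f$. If $w\ge se$ with $e\in\overset{\circ}{P}$, then $w\ge s\min_{M}e$, hence pointwise $f(x,w(x))\le\max_{M}A\,(s\min_{M}e)^{-p}-\min_{M}B\,(s\min_{M}e)^{q}$, which tends to $-\infty$ as $s\to+\infty$; thus $\lim_{x\to+\infty}F(x)=-\infty$. Symmetrically, $0<w\le se$ forces $f(x,w(x))\ge\min_{M}A\,(s\max_{M}e)^{-p}-\max_{M}B\,(s\max_{M}e)^{q}\to+\infty$ as $s\to0^{+}$, so $\lim_{x\to0^{+}}F(x)=+\infty$. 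On a bounded order interval $K=[u_{1},u_{2}]$ with $u_{1}\gg0$ one has $\partial_{t}f(x,t)=-pA(x)t^{-p-1}-qB(x)t^{q-1}\ge-C_{K}$ on $K$, so $t\mapsto f(x,t)+\lambda t$ is nondecreasing for every $\lambda\ge C_{K}$, i.e. $F+\lambda I$ is order preserving on $K$. Theorem 3.2 then produces a positive solution $\widetilde u$ of $P\widetilde u=f(\cdot,\widetilde u)$, trapped between barriers $s_{1}e\le\widetilde u\le s_{2}e$; in particular $\widetilde u$ is bounded away from $0$, so $F$ is genuinely $C^{1}$ near it, which is what makes the construction legitimate.

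For the uniqueness ("more than that, it is the unique solution") the decisive feature is that $f(x,\cdot)$ is \emph{strictly decreasing}. Let $w_{1},w_{2}$ be two positive solutions. For $s_{1}>0$ small, $s_{1}e$ is a subsolution (the right-hand side behaves like $A(x)(s_{1}e)^{-p}$ and blows up, while $s_{1}Pe$ stays bounded), and one may also take $s_{1}$ so small that $s_{1}e\le w_{1}$ and $s_{1}e\le w_{2}$; symmetrically $s_{2}e$ with $s_{2}$ large is a supersolution lying above $w_{1}$ and $w_{2}$. Running the construction of Theorem 3.2 inside $[s_{1}e,s_{2}e]$ with $s_{1}e$ as initial datum produces a stationary solution $\widetilde u$, and since each $w_{i}$ is itself a stationary supersolution dominating the initial datum, the comparison step of that proof --- the maximum principle for $\widetilde A=P+\lambda I$ --- keeps the whole approximating sequence, hence $\widetilde u$, below $w_{i}$. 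Therefore $\widetilde u\le w_{1}$ and $\widetilde u\le w_{2}$. Finally, $\widetilde u$ and $w_{1}$ being now \emph{ordered} solutions, monotonicity of $f$ gives $P(\widetilde u-w_{1})=f(\cdot,\widetilde u)-f(\cdot,w_{1})\ge0$, and the maximum principle yields $\widetilde u\ge w_{1}$, whence $\widetilde u=w_{1}$; likewise $\widetilde u=w_{2}$, and so $w_{1}=w_{2}$.

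The step I expect to be the main obstacle is the very one that the fourth order nature of the problem jeopardizes: every comparison used above --- in the choice of sub/supersolutions, in the monotone iteration, and in the final uniqueness argument --- rests on the standing assumption (positive Yamabe and Paneitz invariants, $P$ positive) under which $P_{g,\Psi}$ factors like, or at least behaves like, a product of two second order operators satisfying the maximum principle, so that $(P_{g,\Psi}+\lambda I)^{-1}$ is positivity preserving and $-P_{g,\Psi}$ generates a positive compact semigroup. Pinning down this positivity-preserving property of the fourth order resolvent and heat flow on $(M,g)$, together with the Schauder estimates keeping the iteration inside $\overset{\circ}{P}$ away from the singularity at $0$, is the delicate point; once it is granted, what remains is the Brezis--Oswald-type monotonicity bookkeeping sketched above.
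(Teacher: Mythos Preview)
Your existence argument is exactly what the paper does: read (\ref{Prob A}) as the stationary problem for (\ref{prob es}) and invoke Theorem~3.2 after checking the three hypotheses on $F(u)=A/u^{p}-Bu^{q}$. The paper does not spell those verifications out, so your pointwise estimates and the Lipschitz bound $\partial_{t}f\ge -C_{K}$ on an order interval are welcome and correct.

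For uniqueness, however, the paper takes a shorter and more direct route than yours. Given two positive solutions $u,v$, it sets $w=u-v$ and uses the mean value theorem to write
\[
P_{g,\psi}w=\frac{A}{u^{p}}-\frac{A}{v^{p}}-B\bigl(u^{q}-v^{q}\bigr)=-C(x)\,w,
\]
with $C(x)\ge 0$ (since $t\mapsto A(x)t^{-p}$ is decreasing and $t\mapsto B(x)t^{q}$ is increasing). Thus $(P_{g,\psi}+C)w=0$, and the standing coercivity/maximum-principle assumption on $P_{g,\psi}$ forces $w\equiv 0$. No preliminary ordering of $u$ and $v$ is needed, and no second pass through the heat-flow construction is required. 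Your Brezis--Oswald-style detour --- manufacture a minimal solution $\widetilde u$ below both, then compare ordered pairs --- is correct and makes the role of monotonicity very explicit, but it is longer and re-invokes Theorem~3.2 where a linear comparison already suffices. Note also that your final step, $P(\widetilde u-w_{1})\ge 0\Rightarrow \widetilde u\ge w_{1}$, is precisely the maximum principle the paper uses in one stroke; the intermediate construction of $\widetilde u$ buys nothing extra here.

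Your closing caveat about the fourth-order maximum principle is apt, and it applies equally to the paper's argument: both routes rest entirely on the standing hypotheses (positive Yamabe and Paneitz invariants, $P$ positive) under which $P_{g,\psi}$ and $P_{g,\psi}+\lambda I$ are positivity preserving.
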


Remark that in this case we can take $q\geq 2^{\sharp }-1$ since we do not
need the compact or continuous embedding in $L^{p}$ spaces.

Now,if we suppose that $B$ is just non-negative. we can show indeed that
even in that case we have a solution.

\begin{cor}
Take $A$ $>0$ and $B\geq 0,$ two smooth functions defined on $M$ and consider
the singular problem :%
\begin{equation*}
\left\{ 
\begin{array}{c}
P_{g,\psi }u=\frac{A(x)}{u^{p}}-B(x)u^{q} \\ 
u>0%
\end{array}%
\right.
\end{equation*}%
where $q\leq 2^{\sharp }-1,$ then it has a unique solution.
\end{cor}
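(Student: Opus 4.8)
The plan is to obtain the solution as a limit of the solutions produced by the previous corollary, and to read off uniqueness directly from the coercivity of $P_{g,\psi}$ together with the monotonicity of the right-hand side. For $\varepsilon\in(0,1]$ put $B_{\varepsilon}=B+\varepsilon$, which is smooth and strictly positive, so the previous corollary yields a unique $u_{\varepsilon}>0$ with
\begin{equation*}
P_{g,\psi}u_{\varepsilon}=\frac{A(x)}{u_{\varepsilon}^{p}}-B_{\varepsilon}(x)u_{\varepsilon}^{q}.
\end{equation*}
The first task is to produce bounds $0<c_{0}\le u_{\varepsilon}\le C_{0}$ on $M$ with $c_{0},C_{0}$ independent of $\varepsilon$. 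Recall that under our standing assumptions $Q_{\psi}=Q-|\nabla\psi|_{g}^{2}>0$, the operator $P_{g,\psi}=P_{0}+a_{n}Q_{\psi}$ is coercive and satisfies the maximum principle, and $P_{g,\psi}c=a_{n}Q_{\psi}c$ for a constant $c$. Since $A$ is bounded below by a positive constant and $B_{\varepsilon}$ is bounded above uniformly in $\varepsilon\le1$, the singular term forces a sufficiently small constant $c_{0}$ to satisfy $a_{n}Q_{\psi}c_{0}\le A c_{0}^{-p}-B_{\varepsilon}c_{0}^{q}$ pointwise, i.e. $c_{0}$ is a subsolution; because $B\ge0$, a sufficiently large constant $C_{0}$ satisfies $a_{n}Q_{\psi}C_{0}\ge A C_{0}^{-p}\ge A C_{0}^{-p}-B_{\varepsilon}C_{0}^{q}$, i.e. $C_{0}$ is a supersolution. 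The comparison principle for $P_{g,\psi}$ then gives $c_{0}\le u_{\varepsilon}\le C_{0}$ for all $\varepsilon$.

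With these $L^{\infty}$ bounds, and $u_{\varepsilon}$ uniformly bounded away from zero, the right-hand sides $A u_{\varepsilon}^{-p}-B_{\varepsilon}u_{\varepsilon}^{q}$ are uniformly bounded in $L^{r}$ for every $r$. Applying $W^{4,r}$ elliptic estimates for the uniformly elliptic coercive fourth-order operator $P_{g,\psi}$ bounds $u_{\varepsilon}$ in $C^{3,\alpha}$, and then a standard bootstrap (using the smoothness of $A$, $B$, $g$, $\psi$ and $u_{\varepsilon}\ge c_{0}$, so that $x\mapsto A(x)u_{\varepsilon}(x)^{-p}$ inherits the regularity of $u_{\varepsilon}$) bounds $u_{\varepsilon}$ in $C^{k}$ for every $k$. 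By Arzelà--Ascoli, along a subsequence $u_{\varepsilon}\to u$ in $C^{4,\alpha}$; letting $\varepsilon\to0$ in the equation and in $u_{\varepsilon}\ge c_{0}$ shows $u\ge c_{0}>0$ and $P_{g,\psi}u=A u^{-p}-B u^{q}$, which is the desired existence.

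For uniqueness, let $u_{1},u_{2}$ be two positive (hence smooth, by elliptic regularity) solutions and set $w=u_{1}-u_{2}$ and $f(x,t)=A(x)t^{-p}-B(x)t^{q}$. Pairing $P_{g,\psi}w=f(x,u_{1})-f(x,u_{2})$ with $w$ and using coercivity of $P_{g,\psi}$ gives $\lambda_{1}\|w\|_{2}^{2}\le\int_{M}w\,P_{g,\psi}w=\int_{M}w\,(f(\cdot,u_{1})-f(\cdot,u_{2}))$ with $\lambda_{1}>0$. Since $\partial_{t}f=-pA t^{-p-1}-qB t^{q-1}<0$ on $(0,\infty)$ (because $A>0$, $B\ge0$, $p,q>1$), the map $t\mapsto f(x,t)$ is strictly decreasing, so the integrand on the right is $\le0$ pointwise; hence $w\equiv0$. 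Note this last argument is insensitive to whether $B>0$ or $B\ge0$, and in particular applies equally to the approximate problems.

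The main obstacle I expect is the uniform positive lower bound $u_{\varepsilon}\ge c_{0}$: it is exactly what keeps the singular nonlinearity $A u^{-p}$ controlled along the approximation and what makes the bootstrap legitimate, so the choice of the subsolution constant must be made carefully and the use of the fourth-order maximum principle (available here by the standing hypotheses) must be invoked explicitly rather than taken for granted. One could instead re-run the heat-flow scheme of Theorem 3.2 directly, observing that its hypothesis $\lim_{t\to\infty}f(x,t)=-\infty$ may be replaced by the weaker fact that a large constant is a supersolution, which still holds since $P_{g,\psi}$ applied to a constant grows linearly while $f(x,t)\le A(x)t^{-p}\to0$; the approximation route, however, keeps the argument within the framework already established.
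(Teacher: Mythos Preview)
Your proof is correct and follows the same overall strategy as the paper: perturb $B$ to $B_{\varepsilon}=B+\varepsilon$, invoke the previous corollary, and pass to the limit. The technical route differs, however. The paper only uses the uniform \emph{lower} bound $u_{\varepsilon}\ge\delta$ (coming from the constant subsolution in Theorem~3.2) and then derives an $H^{2}$ energy bound from
\[
\int_{M}u_{\varepsilon}P_{g,\psi}u_{\varepsilon}\le\int_{M}\frac{A}{u_{\varepsilon}^{\,p-1}}\le\frac{1}{\delta^{p-1}}\int_{M}A,
\]
extracting a weak $H^{2}$ limit and using $q+1\le 2^{\sharp}$ to pass to the limit in the nonlinear term; this is precisely where the hypothesis $q\le 2^{\sharp}-1$ enters. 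You instead produce a two-sided $L^{\infty}$ barrier $c_{0}\le u_{\varepsilon}\le C_{0}$ via constant sub/super-solutions and the comparison principle, then bootstrap to $C^{k}$ compactness. Your route is a bit stronger (it yields uniform $C^{k}$ bounds and in fact does not need the restriction on $q$), at the price of invoking the fourth-order comparison principle one more time. For uniqueness the two arguments are equivalent: the paper writes $P_{g,\psi}w=-C(x)w$ with $C\ge0$ and applies the maximum principle, while you pair with $w$ and use coercivity together with the strict monotonicity of $t\mapsto A t^{-p}-B t^{q}$; both exploit the same sign.
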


Let $u_{\varepsilon }$ be the solution obtained by Corollary (3.3), of

\begin{equation*}
\left\{ 
\begin{array}{c}
P_{g,\psi }u=\frac{A(x)}{u^{p}}-B_{\varepsilon }(x)u^{q} \\ 
u>0%
\end{array}%
\right.
\end{equation*}

\bigskip where $B_{\varepsilon }=B+\varepsilon .$ First remark that $%
u_{\varepsilon }$ is uniformly bounded from bellow (it is by construction of
the sub and super solution in the proof of Theorem (3.2)).

So 
\begin{eqnarray*}
\int_{M}uP_{g,\psi }u &=&\int_{M}\frac{A(x)}{u_{\varepsilon }^{p-1}}%
-\int_{M}\left( B(x)+\varepsilon \right) u_{\varepsilon }^{q+1} \\
&\leq &\int_{M}\frac{A(x)}{\delta }=C
\end{eqnarray*}%
where $\delta $ is the uniform lower bound of $u_{\varepsilon }.$ Therefore $%
\left( u_{\varepsilon }\right) _{\varepsilon }$ is bounded in $H^{2}\left(
M\right) $ and if $q+1\leq 2^{\sharp },$ we have $u_{\varepsilon
}\longrightarrow u$ in $L^{2}$ and weakly in $H^{2}\left( M\right) $ and $%
L^{2^{\sharp }}(M).$

So if we take $\varphi \in C^{\infty }(M),$ we have a weak solution which we
can show using the regularity theory that is is indeed smooth. 
\begin{equation*}
\int_{M}\varphi P_{g,\psi }u_{\varepsilon }=\int_{M}\frac{A(x)}{%
u_{\varepsilon }^{p-1}}\varphi -\int_{M}\left( B(x)+\varepsilon \right)
u_{\varepsilon }^{q}\varphi
\end{equation*}%
so by letting $\varepsilon \longrightarrow 0$ we get that 
\begin{equation*}
\int_{M}\varphi P_{g,\psi }u=\int_{M}\frac{A(x)}{u^{p-1}}\varphi
-\int_{M}B(x)u^{q}\varphi
\end{equation*}%
so $u$ is a weak solution and using elliptic regularity we get the fact that
it is indeed a smooth one.

For the uniqueness, if we consider two smooth positive solutions $u$ and $v$
of then $w=u-v$ satisfies :%
\begin{eqnarray*}
P_{g,\psi }w &=&\frac{A(x)}{u^{p}}-\frac{A(x)}{v^{p}}+B_{\varepsilon
}(x)v^{q}-B_{\varepsilon }(x)u^{q} \\
&=&-C(x)(u-v)=-C(x)w
\end{eqnarray*}%
where $C(x)$ is a non-negative function that we get from the mean value
theorem, therefore using the maximum principle we get the desired result.

As an improvement of the previous result we have :

\begin{teo}
Assume that $B^{+}$ is non-zero then problem (\ref{Prob A}) has at least one
positive solution if the following inequality is satisfied 
\begin{equation}
\max_{M}\left( A^{\frac{q-1}{p+q}}B_{-}^{\frac{p+1}{p+q}}\varphi _{1}^{q%
\frac{p+1}{p+q}-p\frac{q-1}{p+q}-1}\right) \leq \frac{\lambda _{1}}{\left( 
\frac{q-1}{p+1}\right) ^{\frac{p+1}{p+q}}+\left( \frac{p+1}{q-1}\right) ^{%
\frac{q-1}{p+q}}},  \label{ineq}
\end{equation}%
where $\lambda _{1}$ and $\varphi _{1}$ are the first eigenvalue and
eigenfunction of $P_{g,\psi },$ respectively.
\end{teo}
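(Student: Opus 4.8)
The plan is to solve the steady-state problem (\ref{Prob A}) by the method of ordered sub- and supersolutions, running the monotone iteration that already appears in the proof of Theorem 3.2 but this time inside a fixed order interval rather than through the global limits of $F$. One cannot quote Theorem 3.2 verbatim: now that $B$ is allowed to be negative somewhere, at such points $\frac{A(x)}{u^{p}}-B(x)u^{q}\to+\infty$ as $u\to+\infty$, so the hypothesis $\lim_{u\to+\infty}F(x,u)=-\infty$ breaks down. Instead, once we produce two smooth functions $0<\underline{u}\le\overline{u}$ with
\[
P_{g,\psi}\underline{u}\le\frac{A}{\underline{u}^{p}}-B\,\underline{u}^{q},\qquad P_{g,\psi}\overline{u}\ge\frac{A}{\overline{u}^{p}}-B\,\overline{u}^{q},
\]
we proceed as follows. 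On $K=[\underline{u},\overline{u}]$ the nonlinearity $u\mapsto F(x,u)=\frac{A(x)}{u^{p}}-B(x)u^{q}$ has bounded $u$-derivative (since $\underline{u}$ is bounded away from $0$), so there is $\lambda>0$ making $F+\lambda I$ order preserving on $K$; iterating $(P_{g,\psi}+\lambda)u^{k+1}=F(\cdot,u^{k})+\lambda u^{k}$ from $u^{0}=\underline{u}$ and using that $(P_{g,\psi}+\lambda)^{-1}$ is positivity preserving (the standing coercivity and maximum-principle assumptions) gives a non-decreasing sequence trapped in $K$; it converges, and elliptic regularity---with $\underline{u}>0$ keeping the singular term bounded---promotes the limit to a smooth positive solution of (\ref{Prob A}). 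So everything reduces to the construction of $\underline{u}$ and $\overline{u}$, and here $\lambda_{1}>0$ and $\varphi_{1}>0$ smooth are obtained from Theorem 3.1 applied to the compact, strongly order-preserving operator $P_{g,\psi}^{-1}$.

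For the subsolution take $\underline{u}=\varepsilon\varphi_{1}$. Using $P_{g,\psi}\varphi_{1}=\lambda_{1}\varphi_{1}$ the subsolution inequality becomes $\lambda_{1}\varepsilon^{p+1}\varphi_{1}^{p+1}+B\,\varepsilon^{p+q}\varphi_{1}^{p+q}\le A$ on $M$, which holds once $\varepsilon$ is small because $A\ge\min_{M}A>0$ while $\varphi_{1}$ and $B$ are bounded; we fix $\overline{u}$ first and shrink $\varepsilon$ afterwards so that $\underline{u}\le\overline{u}$. For the supersolution take $\overline{u}=T\varphi_{1}$ with $T>0$ to be chosen. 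Since $-B\,T^{q}\varphi_{1}^{q}\le B_{-}T^{q}\varphi_{1}^{q}$, it suffices that $\lambda_{1}T\varphi_{1}\ge\frac{A}{T^{p}\varphi_{1}^{p}}+B_{-}T^{q}\varphi_{1}^{q}$ on $M$, i.e., after dividing by $T$,
\[
\lambda_{1}\varphi_{1}\ \ge\ \frac{A}{T^{p+1}\varphi_{1}^{p}}+B_{-}\,T^{q-1}\varphi_{1}^{q}\qquad\text{on }M .
\]

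Now the inequality (\ref{ineq}) enters. For fixed $x$, the right-hand side above, as a function of $T>0$, is minimized at $T=\big(\tfrac{(p+1)A\varphi_{1}^{-p}}{(q-1)B_{-}\varphi_{1}^{q}}\big)^{1/(p+q)}$ with minimal value
\[
\Big(\big(\tfrac{q-1}{p+1}\big)^{\frac{p+1}{p+q}}+\big(\tfrac{p+1}{q-1}\big)^{\frac{q-1}{p+q}}\Big)\big(A\varphi_{1}^{-p}\big)^{\frac{q-1}{p+q}}\big(B_{-}\varphi_{1}^{q}\big)^{\frac{p+1}{p+q}}=C_{p,q}\,A^{\frac{q-1}{p+q}}B_{-}^{\frac{p+1}{p+q}}\varphi_{1}^{\,q\frac{p+1}{p+q}-p\frac{q-1}{p+q}},
\]
where $C_{p,q}=\big(\tfrac{q-1}{p+1}\big)^{\frac{p+1}{p+q}}+\big(\tfrac{p+1}{q-1}\big)^{\frac{q-1}{p+q}}$ is the constant in the denominator of (\ref{ineq}); dividing the required inequality once more by $\varphi_{1}$ turns the exponent into $q\frac{p+1}{p+q}-p\frac{q-1}{p+q}-1$, exactly the power appearing in (\ref{ineq}). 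Hence (\ref{ineq}) asserts precisely that at every $x$ the infimum over $T$ of the right-hand side of the displayed inequality stays below $\lambda_{1}\varphi_{1}(x)$; selecting a single value of $T$ that handles the extremal point while remaining within the admissible window at the remaining points makes $\overline{u}=T\varphi_{1}$ a supersolution, and the first paragraph then closes the proof.

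The main obstacle is exactly this last point. For each $x$ the set of admissible $T$ is an interval depending on $x$, and one must guarantee that these intervals share a common value---in effect an interchange of the $\max$ over $M$ with the $\inf$ over $T$. This is why (\ref{ineq}) is stated in its balanced, Young-sharp form, with the conjugate exponents $\tfrac{q-1}{p+q}$ and $\tfrac{p+1}{p+q}$ and the constant $C_{p,q}$: extracting the uniform $T$ from it is the technical heart of the argument. As a sanity check, when $B_{-}\equiv0$ (that is, $B\ge0$) the left side of (\ref{ineq}) vanishes, the hypothesis is automatic, and one is back in the situation of Corollaries 3.3--3.4; the content of Theorem 3.5 is the regime where $B$ has a genuine negative part, in which the bound on $\max_{M}\big(A^{\frac{q-1}{p+q}}B_{-}^{\frac{p+1}{p+q}}\big)$ is a real restriction.
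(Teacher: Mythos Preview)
Your outline follows the paper's own argument almost exactly: build a sub-/supersolution pair and run monotone iteration, with the supersolution taken of the form $T\varphi_{1}$ and the admissibility of $T$ analysed by comparing the line $t\mapsto t\lambda_{1}\varphi_{1}$ against the convex function $t\mapsto A\varphi_{1}^{-p}t^{-p}+B_{-}\varphi_{1}^{q}t^{q}$. Your pointwise minimisation over $T$ reproduces precisely the ``tangent from the origin'' computation the paper sketches pictorially; note incidentally that the exponent of $\varphi_{1}$ in~(\ref{ineq}) equals $\frac{q(p+1)-p(q-1)-(p+q)}{p+q}=0$, so the eigenfunction actually drops out of the condition.

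The one substantive difference is the \emph{subsolution}. The paper does \emph{not} take $\varepsilon\varphi_{1}$; it takes $\underline{u}$ to be the (unique, smooth, positive) solution of
\[
P_{g,\psi}u=\frac{A}{u^{p}}-B^{+}u^{q},
\]
whose existence is furnished by Corollary~3.4 since $B^{+}\ge 0$. Because $B\le B^{+}$ this $\underline{u}$ is automatically a subsolution of~(\ref{Prob A}), and this is exactly where the hypothesis ``$B^{+}$ non-zero'' enters the paper's argument. Your choice $\varepsilon\varphi_{1}$ is simpler and also works once $A>0$, but it leaves the standing hypothesis $B^{+}\not\equiv 0$ unused, which is a hint that you have deviated from the intended construction.

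As for the point you flag as the ``main obstacle'' --- passing from the pointwise statement ``for every $x$ there is an admissible $T$'' to a single $T$ valid on all of $M$ --- you are right that this interchange of $\max_{M}$ and $\inf_{T}$ is not automatic, and it is worth recording that the paper does not spell it out either: after the convexity picture it simply asserts ``thus there exists $t_{0}>0$ such that $t_{0}\varphi_{1}$ is a super-solution''. So your proposal is faithful to the paper's proof, identifies the same mechanism, and is in fact more candid about the one step that is left implicit.
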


first let \underline{$u$} be a solution of 
\begin{equation*}
\left\{ 
\begin{array}{c}
P_{g,\psi }u=\frac{A(x)}{u^{p}}-B^{+}(x)u^{q} \\ 
u>0%
\end{array}%
\right. 
\end{equation*}%
In fact since we are going to use this process another time let us give the
picture and the idea behind :

Consider a convex function positive $f:\mathbb{R}^{+}\longrightarrow \mathbb{R}$ and, so for it to intersect a line $L$ passing through the origin its
slope should be greater than the one of the unique tangent to the graph of
passing through the origin as shown in the following figure : \\
\begin{center}
\includegraphics[width=80mm,height=60mm]{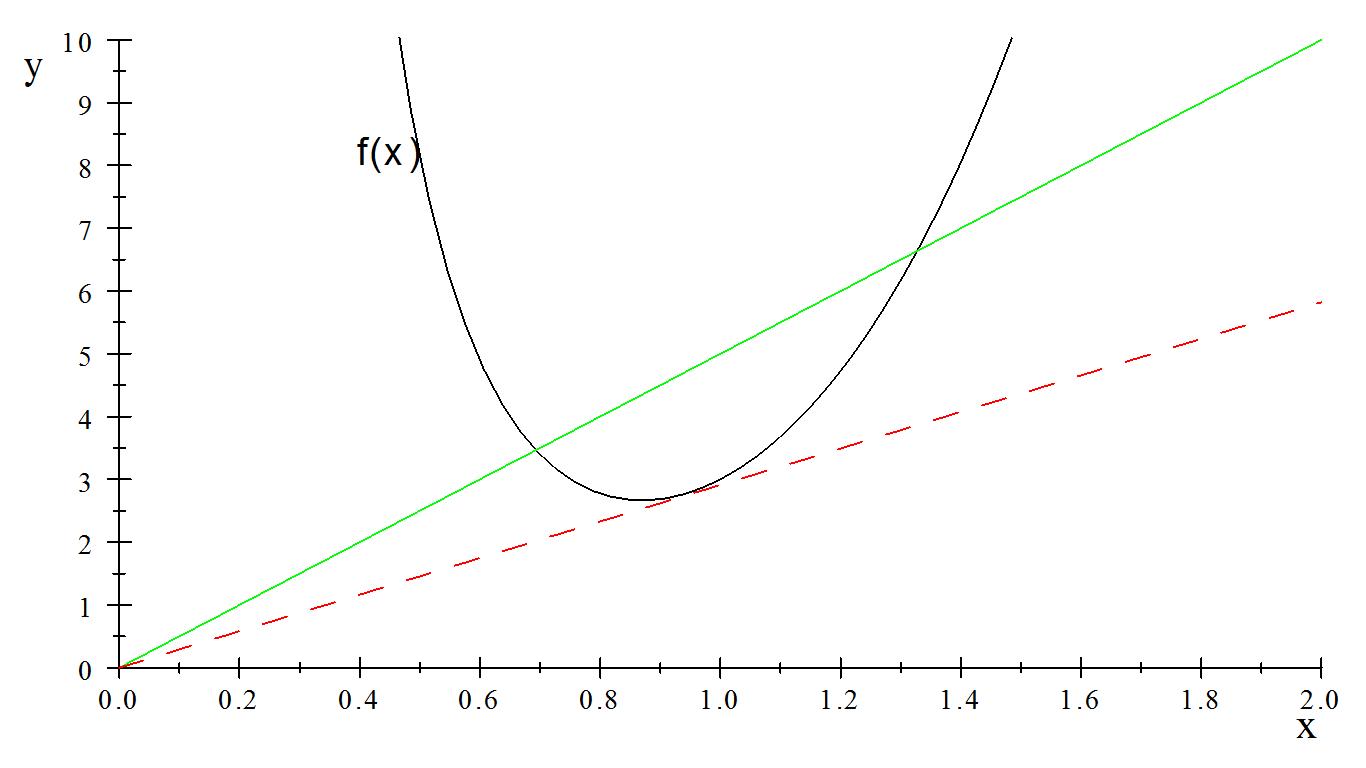}
\end{center}
and the condition to find the slope at zero is by solving 
\begin{equation}
\frac{f(t)}{t}=f^{\prime }(t).  \label{conv}
\end{equation}

So if we take $\varphi _{1}$ the first eigenfunction of $P$, we get 
\begin{eqnarray*}
tP\varphi _{1}-\frac{A(x)}{t^{p}\varphi ^{p}}+B(x)t^{q}\varphi _{1}^{q}
&=&t\lambda _{1}\varphi _{1}-\frac{A(x)}{t^{p}\varphi _{1}^{p}}%
+B(x)t^{q}\varphi _{1}^{q} \\
&\geq &t\lambda _{1}\varphi _{1}-\frac{A(x)}{t^{p}\varphi _{1}^{p}}%
-B^{-}(x)t^{q}\varphi _{1}^{q}.
\end{eqnarray*}%
And here we can see that in fact we are comparing $t\longrightarrow t\lambda
_{1}\varphi _{1}$ and $t\longrightarrow \frac{A(x)}{t^{p}\varphi _{1}^{p}}%
+B^{-}(x)t^{q}\varphi _{1}^{q}$ which is convex, thus using the previous
remark we can see that the inequality (\ref{ineq}) insures that we are in
the same situation as \ref{Fig1} and thus there exist $t_{0}>0$ such that $%
t_{0}\varphi _{1}$ is a super-solution to (\ref{Prob A}) therefore, using
the classical monotone iteration method we get a positive solution.

\subsection{Further investigations and existence results}

Here we investigate the case where $B<0.$ The coercivity assumption implies
that 
\begin{equation*}
\left\Vert u\right\Vert _{\psi }=\left( \int_{M}uP_{g,\psi }u\right) ^{\frac{%
1}{2}},
\end{equation*}%
defines a norm equivalent to the $H^{2}(M)$ norm. So we will use that norm
instead of the usual one. Also we take $S_{\psi }$ the best Sobolev constant
with respect to this norm, that is $S_{\psi }$ is the est constant satisfying 
\begin{equation*}
\left\Vert u\right\Vert _{L^{2^{\sharp }}}^{2}S_{\psi }\leq \left\Vert
u\right\Vert _{\psi }^{2}
\end{equation*}

Remark that for $B<0$ this condition still work, but let us try to find
another condition that works in a weaker setting. We will rewrite the
problem as 
\begin{equation}
\left\{ 
\begin{array}{c}
P_{g,\psi }u=\frac{A(x)}{u^{p}}+B(x)u^{q} \\ 
u>0%
\end{array}%
\right. ,  \label{Prob B}
\end{equation}%
and $B$ here is taken to be positive (in fact one get a similar result if $B$
has a negative part up to a small modification to the assumption in the
following theorem).

For the regularity issues we refer to \cite{Car} and \cite{Car2}, there one
can find the necessary regularity and bootstrapping argument to deal with it.

\begin{teo}
Assume that $P$ is strongly positive (that is it satisfies the strong
maximum principle). If there exist a function $\varphi >0$ in $H^{2}\left(
M\right) $ such that 
\begin{equation}
\left\Vert \varphi \right\Vert _{\psi }^{(p-1)}\left\Vert B\right\Vert _{L^{%
\frac{2^{\sharp }}{2^{\sharp }-q-1}}}^{\frac{p+1}{q-1}}\left( \int_{M}\frac{A%
}{\varphi ^{p-1}}\right) <C  \label{Cond}
\end{equation}%
then problem (\ref{Prob B}) has at least one positive smooth solution.
\end{teo}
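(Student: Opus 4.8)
The plan is to set up a constrained minimization problem whose critical points solve \eqref{Prob B}, and to use the hypothesis \eqref{Cond} to confine the minimization to a region where the singular term $A/u^{p}$ and the coercivity of $P_{g,\psi}$ cooperate rather than compete. Concretely, I would work in the Banach space $H^{2}(M)$ with the norm $\|\cdot\|_{\psi}$ (equivalent to the usual one by coercivity), and consider the functional
\begin{equation*}
J(u)=\frac{1}{2}\|u\|_{\psi}^{2}+\frac{1}{p-1}\int_{M}\frac{A}{u^{p-1}}-\frac{1}{q+1}\int_{M}B\,|u|^{q+1},
\end{equation*}
suitably modified so that the singular term only sees the positive part of $u$ (e.g.\ replace $u$ by $u_{+}$ and add a penalization forcing $u>0$, or restrict to the positive cone $\mathcal{C}=\{u\in H^{2}:u>0\}$). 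A critical point of $J$ on $\mathcal{C}$ is exactly a positive weak solution, and then the regularity/bootstrap argument of \cite{Car},\cite{Car2} promotes it to a smooth one.

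The first step is to show $J$ is bounded below on a suitable set and satisfies a Palais--Smale-type condition. The term $\int_{M}B|u|^{q+1}$ is controlled via Hölder with exponents $\frac{2^{\sharp}}{q+1}$ and $\frac{2^{\sharp}}{2^{\sharp}-q-1}$, giving $\int_{M}B|u|^{q+1}\le \|B\|_{L^{2^{\sharp}/(2^{\sharp}-q-1)}}\|u\|_{L^{2^{\sharp}}}^{q+1}\le S_{\psi}^{-(q+1)/2}\|B\|_{L^{2^{\sharp}/(2^{\sharp}-q-1)}}\|u\|_{\psi}^{q+1}$; since $q+1\le 2^{\sharp}$ and (for the coercivity argument) $q+1<2^{\sharp}$ this is subcritical and the embedding is compact, so a bounded minimizing sequence converges strongly in $L^{q+1}$. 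The second step — and this is where \eqref{Cond} enters — is to exhibit a test function, namely a multiple $t\varphi$ of the given $\varphi>0$, at which $J(t\varphi)$ is strictly negative or at least below the threshold where the minimization stays in the good region. Computing $J(t\varphi)$ as a function of $t>0$ one gets a sum of the form $\frac{t^{2}}{2}\|\varphi\|_{\psi}^{2}+\frac{t^{-(p-1)}}{p-1}\int_{M}A\varphi^{-(p-1)}-\frac{t^{q+1}}{q+1}\int_{M}B\varphi^{q+1}$; optimizing in $t$ (balancing the singular term $t^{-(p-1)}$ against the $\|\varphi\|_{\psi}^{2}$ term at scale $t\sim(\int A\varphi^{-(p-1)}/\|\varphi\|_{\psi}^{2})^{1/(p+1)}$) produces exactly the combination $\|\varphi\|_{\psi}^{p-1}(\int_{M}A\varphi^{-(p-1)})$ raised to appropriate powers, and the constraint that the negative $B$-term dominates at that scale is precisely \eqref{Cond} with the constant $C$ absorbing the $S_{\psi}$ factor and the numerical exponents $\frac{2}{p+1}$, $\frac{p-1}{p+1}$, etc.

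The third step is to run the minimization: on the cone $\mathcal{C}$, restricted to $\{J<0\}$ (or to a sublevel set staying away from $u\equiv 0$), extract a minimizing sequence, pass to the weak limit $u_{\infty}$ in $H^{2}$ using weak lower semicontinuity of $\|\cdot\|_{\psi}^{2}$ and of $\int A u^{-(p-1)}$ (by Fatou, since $A>0$ and $t\mapsto t^{-(p-1)}$ is convex and lower semicontinuous), and strong $L^{q+1}$ convergence for the $B$-term. The key point to check is that $u_{\infty}$ does not touch zero on a set of positive measure: if it did, $\int A u_{\infty}^{-(p-1)}=+\infty$, contradicting $J(u_{\infty})\le \liminf J(u_{k})<0$. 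Hence $u_{\infty}>0$ a.e., it is a genuine minimizer, its Euler--Lagrange equation is \eqref{Prob B}, and the strong maximum principle for $P$ (assumed strongly positive) together with elliptic regularity gives a positive smooth solution.

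The main obstacle I expect is the interplay between the singular term and the cone constraint in the variational setup — one must verify that the minimizer is an \emph{interior} critical point (so no Lagrange multiplier from the constraint $u>0$ survives), which is exactly what the "stays away from zero" estimate above is for, and that the Gâteaux derivative of $u\mapsto \int A u^{-(p-1)}$ is well-defined and equals $-\int A u^{-p}\varphi$ along admissible variations. A careful truncation argument (working first with $B_{\varepsilon}$ or with a regularized singularity $A/(u+\varepsilon)^{p}$ as in Corollary 3.4 above, then letting $\varepsilon\to 0$ using the uniform lower bound coming from the sub/supersolution construction) handles this cleanly; the uniform lower bound is the analytic heart of the matter, and it is inherited from the same monotone-iteration machinery used in Theorem 3.2 and its corollaries.
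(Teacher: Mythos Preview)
Your minimization approach has a structural gap: the functional $J$ is \emph{unbounded below} on the positive cone. Indeed, for any fixed $\varphi>0$ one has $J(t\varphi)=\frac{t^{2}}{2}\|\varphi\|_{\psi}^{2}+\frac{t^{-(p-1)}}{p-1}\int_{M}A\varphi^{-(p-1)}-\frac{t^{q+1}}{q+1}\int_{M}B\varphi^{q+1}\to -\infty$ as $t\to+\infty$, since $q+1>2$ and $B>0$. Consequently your proposed constraint set $\{J<0\}$ (or any sublevel set of $J$) is unbounded and contains rays along which $J\to-\infty$, so a minimizing sequence there will simply escape to infinity and no minimizer exists. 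Your step~2 also misreads the role of \eqref{Cond}: it is not the condition that the $B$-term dominates at the balance scale of the first two terms (that would only force $J(t^{*}\varphi)<0$, which is uninformative), but rather a smallness condition allowing one to place a test point $t_{0}\varphi$ \emph{inside} a fixed ball $B(0,r_{0})$ while keeping the singular contribution $E_{2}(t_{0}\varphi)=\frac{1}{p-1}\int_{M}A/(t_{0}\varphi)^{p-1}$ below the mountain-pass level $\delta=\inf_{\partial B(0,r_{0})}E_{1}$ of the non-singular part $E_{1}(u)=\frac{1}{2}\|u\|_{\psi}^{2}-\frac{1}{q+1}\int_{M}Bu^{q+1}$.

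The paper's argument is a mountain pass, not a minimization. One first observes that $E_{1}$ has the standard Ambrosetti--Rabinowitz geometry with barrier height $\delta$ on $\partial B(0,r_{0})$ for $r_{0}=\|B\|_{L^{s}}^{-1/(q-1)}S_{\psi}^{-(q+1)/(2(q-1))}$; condition \eqref{Cond} is then exactly what is needed (via Lemma~3.7) to find $t_{0}<r_{0}$ with $E_{2}(t_{0}\varphi)<\delta$, so that the full (regularized) functional $E_{\varepsilon}$ inherits a mountain-pass geometry between $t_{0}\varphi$ and some large $t_{2}\varphi$. One then runs the min-max, obtains a Palais--Smale sequence at a level $c_{\varepsilon}$ uniformly bounded independently of $\varepsilon$, extracts a weak limit $u_{\varepsilon}$ solving the regularized equation, and passes $\varepsilon\to0$. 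The uniform positive lower bound on $u_{\varepsilon}$ is obtained not by sub/super-solutions but from the Green's function representation together with the integral estimate $\int_{M}A/u^{p+1}<C$ (which comes from the energy identity $(q+1)E_{\varepsilon}-\langle E_{\varepsilon}',u\rangle$). Your regularization idea in the last paragraph is correct and matches the paper, but it must be embedded in the mountain-pass framework, not a global minimization; alternatively, a \emph{local} minimization on the closed ball $\overline{B(0,r_{0})}$ would produce a second (local-minimum) solution, but you would then need to argue that the infimum is attained in the interior of that ball, which your proposal does not set up.
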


In fact we will compute an exact value of $C,$ That is 
\begin{equation}
C=S_{\psi }^{-\frac{\left( q+1\right) \left( p+2q+1\right) }{2(q-1)}}\left( 
\frac{\left( q-1\right) \left( p-1\right) }{2}\right)   \label{Con}
\end{equation}%
Befor Starting the proof let us state the following lemma which appears to
be helpful in our situation.

\begin{lemma}
Let $E,$ $E_{1}$, $E_{2}$ be three $C^{1}$ functional on a Banach space $X.$
Assume that $E_{1}(0)=0$ and $\lim E(t\varphi )=-\infty $. and $E_{2}\geq 0.$
Then If $E_{1}$ has the montainpass geometry around zero, (that is there
exist $r>0$ such that $\delta =\inf_{\partial B(0,r)}E_{1}>0$) and there
exist $u\in B(0,r)$ such that $E_{2}(u)<\delta $, the functional $E$ has a
Palais-Smale sequence.
\end{lemma}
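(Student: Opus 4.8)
The plan is to read this as a version of the Mountain Pass Theorem in which $E_1$ supplies the local geometry at the origin while the nonnegative functional $E_2$ only lifts $E$ up; the three are related by $E=E_1+E_2$, as in the application to (\ref{Prob B}), where $E_1(u)=\frac12\int_M uP_{g,\psi}u-\frac1{q+1}\int_M Bu^{q+1}$ is the regular part, so $E_1(0)=0$, and $E_2(u)=\frac1{p-1}\int_M A\,u^{-(p-1)}\ge 0$ is the singular term (so that, in that setting, $X$ is taken to be a space on which $E_2$ is $C^1$, following \cite{Car},\cite{Car2}). The argument is then a min--max over paths followed by the quantitative deformation lemma, which needs only $E\in C^1$ — no compactness is used at this stage.

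The mountain first: since $E_2\ge 0$ one has $E\ge E_1$ pointwise, hence $\inf_{\partial B(0,r)}E\ge\inf_{\partial B(0,r)}E_1=\delta>0$. Then the two wells. From $\lim_{t\to\infty}E(t\varphi)=-\infty$ choose $T>0$ with $\|T\varphi\|_X>r$ and $E(T\varphi)<0<\delta$, and set $e=T\varphi$; this is the far well, outside $\overline{B(0,r)}$. For the near well I take $u_0\in B(0,r)$ with $E(u_0)<\delta$; producing such a point is the quantitative core of the statement, and it is here that the hypothesis $E_2(u)<\delta$ for some $u\in B(0,r)$ is invoked — sharpened, in the applications, to the explicit inequality (\ref{Cond}) with the constant (\ref{Con}): one optimises $E$ along the half-line $t\mapsto t\varphi$, so that for a suitable $t_0$ the point $u_0=t_0\varphi$ lies in $B(0,r)$ and the minimum of $E$ over $\{t\varphi:\,0<t\le t_0\}$ — controlled by balancing $E_1(t\varphi)$ against $E_2(t\varphi)$ — falls below $\delta$.

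Now set
\[
c=\inf_{\gamma\in\Gamma}\max_{t\in[0,1]}E(\gamma(t)),\qquad
\Gamma=\{\gamma\in C([0,1],X):\gamma(0)=u_0,\ \gamma(1)=e\}.
\]
The class $\Gamma$ is nonempty — the segment $t\mapsto(1-t)u_0+te$ works (in the applications it stays in the domain of $E$, being a convex combination of two functions bounded below by a positive constant) — so $c<\infty$; and since every $\gamma\in\Gamma$ joins a point of $B(0,r)$ to a point outside $\overline{B(0,r)}$, it crosses $\partial B(0,r)$, whence $c\ge\delta>\max\{E(u_0),E(e)\}$. Thus $c\in(0,\infty)$ is a min--max level lying strictly above both endpoints, and the deformation lemma applied to $E$ along $\Gamma$ yields a sequence $(u_n)$ with $E(u_n)\to c$ and $E'(u_n)\to 0$: a Palais--Smale sequence for $E$.

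The main obstacle is the near well. The bare hypothesis $E_2(u)<\delta$ is not by itself literally enough: for a mountain-pass radius $r$ one has $E_1>0$ on $B(0,r)\setminus\{0\}$ (the term $\int_M Bu^{q+1}$ cannot dominate $\int_M uP_{g,\psi}u$ inside such a ball), so $E=E_1+E_2>0$ on $B(0,r)$, and one must in addition keep $E_1(u_0)$ small at the chosen point — which is precisely the force of (\ref{Cond})--(\ref{Con}). A secondary technical point, present in the applications, is that $E$ is genuinely $C^1$ only on the open positive cone because of the singular term, so one has to check that the connecting path and the deformation flow stay there; this follows from the blow-up of $E_2$ at the boundary of the cone, which keeps the sublevel set $\{E\le c+1\}$ uniformly away from $\partial\{u>0\}$, together with the strong maximum principle assumed for $P$.
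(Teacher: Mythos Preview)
Your argument is the paper's own: define the path class $\Gamma$ from the near point $u\in B(0,r)$ to a far point $t\varphi$ with $E(t\varphi)$ very negative, observe that every $\gamma\in\Gamma$ crosses $\partial B(0,r)$ where $E\ge E_1\ge\delta$, deduce $c\ge\delta>\max\{E(u),E(t\varphi)\}$, and invoke the mountain-pass/deformation lemma to extract a Palais--Smale sequence at level $c$. Your closing observation --- that the bare hypothesis $E_2(u)<\delta$ does not by itself force $E(u)<\delta$, since $E_1$ need not vanish at $u$ --- is correct and is a point the paper's two-line proof glosses over; in the subsequent application (Theorem~3.6) it is indeed the full inequality (\ref{Cond}), not merely a bound on the $E_2$ part, that delivers $E(t_0\varphi)<E(r_0\varphi)$.
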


\begin{proof}[Proof of Lemma]
Here is is easy to see that if we concider the set 
\begin{equation*}
\Gamma =\left\{ \gamma :\left[ 0,1\right] \longrightarrow X\text{ such that }%
\gamma (0)=u\text{ and }\gamma (1)=t\varphi \right\} 
\end{equation*}%
then we get a Palais-Smale sequence at the level%
\begin{equation*}
c=\inf_{\gamma \in \Gamma }\max E(\gamma (\left[ 0,1\right] ))
\end{equation*}%
Since each curve crosses $\partial B(0,r)$, then $c>\max (E(u),E(t\varphi )),
$ and thus we have a mountain pass geometry.
\end{proof}

\begin{proof}
In fact let $\varphi $ be a positive function such that $\left\Vert \varphi
\right\Vert _{\psi }=1$ and the energy functional 
\begin{equation*}
E(u)=\frac{1}{2}\left\Vert u\right\Vert _{\psi }^{2}+\frac{1}{p-1}\int_{M}%
\frac{A}{\left( \varepsilon +u^{2}\right) ^{\frac{p-1}{2}}}-\frac{1}{q+1}%
\int_{M}Bu^{q+1}.
\end{equation*}%
Clearly the functional $E_{1}$ defined by 
\begin{equation*}
E_{1}(u)=\frac{1}{2}\left\Vert u\right\Vert _{\psi }^{2}-\frac{1}{q+1}%
\int_{M}Bu^{q+1}
\end{equation*}%
has the mountain pass geometry and in fact if $r_{0}=\left\Vert B\right\Vert
_{L^{s}}^{\frac{-1}{q-1}}S_{\psi }^{-\frac{q+1}{2(q-1)}}$ then 
\begin{equation*}
\inf_{u\in \partial B(0,r_{0})}E_{1}(u)=\left\Vert B\right\Vert _{L^{s}}^{%
\frac{-2}{q-1}}S^{-\frac{q+1}{q-1}}\left( \frac{q-1}{2}\right) 
\end{equation*}%
And therefore the inequality (\ref{Cond}) is exactly saying that that $%
t_{0}\varphi $ satisfies the assumption of Lemma (3.7) for $t_{0}<r_{0}$ and
thus we have the existence of $t_{0}<r_{0}<t_{2}$ such that, 
\begin{equation*}
\max (E(t_{0}\varphi ),E(t_{2}\varphi ))<E(r_{0}\varphi )
\end{equation*}%
And in fact we can apply the lemma for the following approximated energy
functional%
\begin{equation*}
E_{\varepsilon }(u)=\frac{1}{2}\left\Vert u\right\Vert _{\psi }^{2}+\frac{1}{%
p-1}\int_{M}\frac{A}{\left( \varepsilon +\left( u^{+}\right) ^{2}\right) ^{%
\frac{p-1}{2}}}-\frac{1}{q+1}\int_{M}B\left( u^{+}\right) ^{q+1}
\end{equation*}%
for $\varepsilon >0$ and small. Remark that we have uniform convergence of $%
t\longrightarrow E_{\varepsilon }(t\varphi )$ to $t\longrightarrow
E(t\varphi ),$ on every compact of $\mathbb{R}.$ Therefore there exist $\varepsilon _{0}>0$ such that for every $%
0<\varepsilon \leq \varepsilon _{0}$ one have
\begin{equation}
\max (E_{\varepsilon _{0}}(t_{0}\varphi ),E_{\varepsilon _{0}}(t_{2}\varphi
))\leq \max (E_{\varepsilon }(t_{0}\varphi ),E_{\varepsilon }(t_{2}\varphi
))<E_{\varepsilon }(r_{0}\varphi )\leq E(r_{0}\varphi ).  \label{crit bound}
\end{equation}%
Therefore if we take 
\begin{equation*}
\Gamma =\left\{ \gamma :\left[ 0,1\right] \longrightarrow H^{2}\left(
M\right) \text{ such that }\gamma \left( 0\right) =t_{0}\varphi ;\text{ }%
\gamma \left( 1\right) =t_{2}\varphi \right\} 
\end{equation*}%
we have a Palais-smale sequence at the level 
\begin{equation*}
c_{\varepsilon }=\inf_{\gamma \in \Gamma }\max_{u\in \gamma \left( \left[ 0,1%
\right] \right) }E_{\varepsilon }(u)
\end{equation*}%
notice that from (\ref{crit bound}) 
\begin{equation*}
\inf_{u\in \partial B(0,r_{0})}E_{1}(u)<c_{\varepsilon }<E(r_{0}\varphi )
\end{equation*}%
and thus $c_{\varepsilon }$ is uniformly bounded. Let us call that
Palais-Smale sequence $(u_{k}^{\varepsilon })_{k}$, it satisfies then 
\begin{equation*}
E_{\varepsilon }(u_{k}^{\varepsilon })\longrightarrow c_{\varepsilon }\text{
and }E_{\varepsilon }^{\prime }(u_{k}^{\varepsilon })\longrightarrow 0\text{
in }H^{-2}\text{ as }k\longrightarrow \infty .
\end{equation*}%
Thus The following holds 
\begin{eqnarray}
O(\left\Vert u_{k}^{\varepsilon }\right\Vert _{\psi }) &=&\left( q+1\right)
E_{\varepsilon }(u_{k}^{\varepsilon })-\left\langle E_{\varepsilon }^{\prime
}(u_{k}^{\varepsilon }),u_{k}^{\varepsilon }\right\rangle   \label{eng est}
\\
&=&\frac{\left( q-1\right) }{2}\left\Vert u_{k}^{\varepsilon }\right\Vert
_{\psi }^{2}+\left( \frac{q+1}{p-1}-1\right) \int \frac{A}{\left(
\varepsilon +\left( u_{k}^{\varepsilon +}\right) ^{2}\right) ^{\frac{p-1}{2}}%
}+\varepsilon \int \frac{A}{\left( \varepsilon +\left( u_{k}^{\varepsilon
+}\right) ^{2}\right) ^{\frac{p+1}{2}}}
\end{eqnarray}%
Therefor 
\begin{equation*}
\left\Vert u_{\varepsilon }^{k}\right\Vert _{\psi }=O(1),
\end{equation*}%
which implies the boundedness of $\left( u_{k}^{\varepsilon }\right) _{k}$
in $H^{2}\left( M\right) $ and thus the existwwence of $u_{\varepsilon }\in
H^{2}\left( M\right) $ such that 
\begin{equation*}
\left\{ 
\begin{array}{c}
u_{k}^{\varepsilon }\rightharpoonup u_{\varepsilon }\text{ weakly in }H^{2}
\\ 
u_{k}^{\varepsilon }\longrightarrow u_{\varepsilon }\text{ strongly in }L^{2}
\\ 
u_{k}^{\varepsilon }\rightharpoonup u_{\varepsilon }\text{ weakly in }%
L^{2^{\sharp }}%
\end{array}%
\right. ,
\end{equation*}%
so take $\eta \in C^{\infty }(M)$, the previous assertion gives that 
\begin{equation*}
\int_{M}\eta P_{g,\psi }u_{\varepsilon }=\int_{M}\frac{Au_{\varepsilon
}^{+}\eta }{\left( \varepsilon +\left( u_{\varepsilon }^{+}\right)
^{2}\right) ^{\frac{p-1}{2}}}+\int_{M}Bu_{\varepsilon }^{q}\eta 
\end{equation*}%
thus $u_{\varepsilon }$ is a weak solution to the problem 
\begin{equation}
P_{g,\psi }u_{\varepsilon }=\frac{Au_{\varepsilon }^{+}}{\left( \varepsilon
+\left( u_{\varepsilon }^{+}\right) ^{2}\right) ^{\frac{p+1}{2}}}%
+Bu_{\varepsilon }^{q}  \label{appr}
\end{equation}%
hence $u_{\varepsilon }$ is smooth and positive.

First, assume that $\left( \frac{q+1}{p-1}-1\right) >0$, then 
\begin{equation*}
\left( q+1\right) E_{\varepsilon }(u_{\varepsilon })-\left\langle
E_{\varepsilon }^{\prime }(u_{\varepsilon }),u_{\varepsilon }\right\rangle
=\left( q+1)\right) c_{\varepsilon }
\end{equation*}%
therefore from (\ref{eng est}) we get 
\begin{eqnarray}
\int_{M}\frac{A}{\left( \varepsilon +\left( u_{\varepsilon }\right)
^{2}\right) ^{\frac{p+1}{2}}} &<&C_{1}  \label{est} \\
\left\Vert u_{\varepsilon }\right\Vert _{H^{2}} &<&C_{2}  \notag
\end{eqnarray}%
where $C_{1},$ $C_{2}$ are constants independant of $\varepsilon .$ Thus we
can extract a subsequence of $\left( u_{\varepsilon }\right) _{\varepsilon }$
that we will call $\left( u_{\varepsilon _{k}}\right) _{\varepsilon _{k}}$
so that 
\begin{equation*}
\left\{ 
\begin{array}{c}
u_{\varepsilon _{k}}\rightharpoonup u\text{ weakly in }H^{2}(M) \\ 
u_{\varepsilon _{k}}\longrightarrow u\text{ strongly in }L^{s}(M)\text{ for }%
1<s<2^{\sharp } \\ 
u_{\varepsilon _{k}}\longrightarrow u\text{ a.e on }M.%
\end{array}%
\right.
\end{equation*}%
Thus using Fatou's lemma in (\ref{est}) we get 
\begin{equation}
\int_{M}\frac{1}{u^{p+1}}<C_{1}.  \label{int est}
\end{equation}%
Assume now that there exist $x_{k}\longrightarrow \overline{x}$ such that $%
u_{\varepsilon _{k}}(x_{k})\longrightarrow 0$. Then using the integral
representation we get 
\begin{equation*}
u_{\varepsilon _{k}}(x_{k})\geq \int_{M}G(x_{k},y)B\left( y\right)
u_{\varepsilon _{k}}^{q}(y)dy
\end{equation*}%
where $G$ is the Green's function of the operator $P_{g,\psi }.$ Taking $%
k\longrightarrow 0$ we get 
\begin{equation*}
\int_{M}G(\overline{x},y)B\left( y\right) u^{q}(y)dy=0,
\end{equation*}%
thus $u=0$ which is impossible because of (\ref{int est}), therfor $%
u_{\varepsilon }$ is uniformly bounded from below.

So now we can pass to the weak limit in (\ref{appr}) to get. 
\begin{equation*}
\int_{M}\eta P_{g,\psi }u=\int_{M}\frac{\eta A}{u^{p}}+\int_{M}Bu^{q}\eta ,%
\text{ for every }\eta \in C^{\infty }(M),
\end{equation*}%
hence, since $u$ is positively bounded from below, we get a smooth positive
solution to 
\begin{equation*}
P_{g,\psi }u=\frac{A}{u^{p}}+Bu^{q}.
\end{equation*}%
If $p-1=q+1$ (and that is the case of the Lichnerowicz Equation), to find a
uniform bound on $\int_{M}\frac{A}{\left( \varepsilon +\left( u_{\varepsilon
}\right) ^{2}\right) ^{\frac{p+1}{2}}},$ we use the fact that $\left\Vert
u_{\varepsilon }\right\Vert _{\psi }$ is uniformly bounded, and Sobolev
embedding to get a uniform bound on $\int_{M}Bu_{\varepsilon }^{q+1}$ and
thus we get the desired bound.
\end{proof}

\begin{cor}
Under the assumption of the previous theorem, we have the existence of
another positive solution
\end{cor}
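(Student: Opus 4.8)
The plan is to produce the second solution as a \emph{local minimum} of the regularized energy, the solution of the previous theorem being of mountain--pass type. I keep the notation of the proof of the previous theorem: the regularized functional $E_\varepsilon$, its ``free part'' $E_1(u)=\tfrac12\|u\|_\psi^2-\tfrac1{q+1}\int_M B(u^+)^{q+1}$, its singular part $E_2(u)=\tfrac1{p-1}\int_M A(\varepsilon+(u^+)^2)^{-(p-1)/2}$ (so $E_\varepsilon=E_1+E_2$, $E_2\ge 0$), the exponent $s=\tfrac{2^\sharp}{2^\sharp-q-1}$, the radius $r_0=\|B\|_{L^s}^{-1/(q-1)}S_\psi^{-(q+1)/(2(q-1))}$ at which $E_1$ has its mountain--pass circle, and the function $\varphi$ (normalized $\|\varphi\|_\psi=1$) and the scalar $t_0<r_0$ furnished by Lemma~(3.7) through (\ref{Cond}). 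The starting observation is that on $\overline{B(0,r_0)}\subset H^2(M)$ one has $E_\varepsilon(u)\ge E_1(u)\ge h(\|u\|_\psi)$, where $h(r)=\tfrac12 r^2-\tfrac1{q+1}\|B\|_{L^s}S_\psi^{-(q+1)/2}r^{q+1}$ is nonnegative and strictly increasing on $[0,r_0]$; in particular $E_\varepsilon$ is bounded below there, and $E_\varepsilon\ge h(\rho)>0$ on $\partial B(0,\rho)$ for every $0<\rho\le r_0$.

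I would then fix a radius $\rho\in(t_0,r_0)$ and minimize $E_\varepsilon$ over the closed ball $\overline{B(0,\rho)}$. That set is bounded, convex and closed, hence weakly compact in $H^2(M)$, and $E_\varepsilon$ is weakly lower semicontinuous on it: the quadratic part by convexity, the term $-\tfrac1{q+1}\int_M B(u^+)^{q+1}$ by the compact embedding $H^2(M)\hookrightarrow L^{q+1}(M)$ (using $q+1<2^\sharp$; in the Lichnerowicz borderline $q+1=2^\sharp$ one controls this term through the uniform $\psi$--norm bound, exactly as at the end of the previous proof), and the singular term by dominated convergence along an a.e.\ convergent subsequence, its integrand being bounded by $\varepsilon^{-(p-1)/2}A\in L^1(M)$. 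Hence the infimum $m_\varepsilon\in[0,\infty)$ is attained at some $v_\varepsilon\in\overline{B(0,\rho)}$. Condition (\ref{Cond}) now re-enters: it makes $E_2(t_0\varphi)$ (whose limit as $\varepsilon\to0$ equals $\tfrac{t_0^{1-p}}{p-1}\int_M A\varphi^{1-p}$, which (\ref{Cond}) with the value (\ref{Con}) bounds below $h(r_0)-h(t_0)$) so small that, choosing $\rho$ close enough to $r_0$ and $\varepsilon$ small, $m_\varepsilon\le E_\varepsilon(t_0\varphi)=E_1(t_0\varphi)+E_2(t_0\varphi)<h(\rho)\le\inf_{\partial B(0,\rho)}E_\varepsilon$. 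Therefore $v_\varepsilon$ is an interior point, hence a free local minimizer, so $E_\varepsilon'(v_\varepsilon)=0$, i.e.\ $v_\varepsilon$ solves (\ref{appr}) weakly. By the regularity theory (\cite{Car},\cite{Car2}) it is smooth; it is $\not\equiv0$ because $E_\varepsilon(v_\varepsilon)<h(\rho)$ while $E_\varepsilon(0)=\tfrac1{p-1}\varepsilon^{-(p-1)/2}\int_M A\to\infty$; and by the strong maximum principle for $P_{g,\psi}$ it is positive.

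Finally I would let $\varepsilon\to0$ as in the previous proof: $\|v_\varepsilon\|_\psi\le\rho$ uniformly; the Pohozaev--type identity (\ref{eng est}) applied to the critical point $v_\varepsilon$, together with $0\le m_\varepsilon<h(\rho)$, gives the uniform bounds (\ref{est}); Fatou then yields (\ref{int est}) for the weak limit $v\ge0$, so $v\not\equiv0$; and the Green's function lower bound shows that $v_\varepsilon$ is bounded away from $0$ uniformly, so one passes to the limit in (\ref{appr}) and $v$ is a smooth positive solution of (\ref{Prob B}). To see that $v$ differs from the solution $u$ of the previous theorem: testing the two regularized equations against $v_\varepsilon-v$, resp.\ against $u_\varepsilon-u$, and using the uniform positive lower bounds together with $q+1<2^\sharp$, one gets strong $H^2(M)$ convergence $v_\varepsilon\to v$ and $u_\varepsilon\to u$, whence $E(v)=\lim m_\varepsilon\le h(\rho)<h(r_0)\le E(u)$ (recall $E(u)=\lim c_\varepsilon$ and $c_\varepsilon>\inf_{\partial B(0,r_0)}E_1\ge h(r_0)$), so $v\ne u$.

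The step I expect to be the real obstacle is this last one: propagating the strict energy gap $E(v)<E(u)$ through the limit $\varepsilon\to0$ (equivalently, ruling out energy loss), which is where subcriticality of $Bu^q$ and the uniform lower bounds are essential, and which in the borderline case $q+1=2^\sharp$ requires the finer bookkeeping already used in the previous proof. A secondary difficulty is checking that the explicit constant (\ref{Con}) in (\ref{Cond}) is sharp enough to force $m_\varepsilon$ strictly below $\inf_{\partial B(0,\rho)}E_\varepsilon$ for an admissible $\rho$, i.e.\ that (\ref{Cond}) really places a point of $B(0,r_0)$ strictly below the level $h(r_0)$ of the \emph{full} functional $E$ and not merely that $E_2(t_0\varphi)<\inf_{\partial B(0,r_0)}E_1$; if a marginally stronger smallness than (\ref{Cond}) is needed, it should be stated.
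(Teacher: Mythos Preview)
Your approach is genuinely different from the paper's. You remain inside the variational framework of Theorem~3.6 and construct the second solution as a \emph{local minimizer} of the regularized energy $E_\varepsilon$ on a small ball, separating it from the mountain--pass solution by a strict energy inequality $E(v)<E(u)$. This is the classical ``local minimum below the pass'' picture, and in the subcritical range $q+1<2^\sharp$ the outline is sound, modulo the two caveats you already flag: propagating the strict gap through $\varepsilon\to0$, and whether the constant (\ref{Con}) actually forces $E(t_0\varphi)<h(\rho)$ for some $\rho<r_0$, rather than merely $E_2(t_0\varphi)<\inf_{\partial B(0,r_0)}E_1$ as Lemma~3.7 requires.

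The paper argues along an entirely different line. It notes that (\ref{Cond}) is \emph{open} in $B$, so Theorem~3.6 also furnishes solutions $u_{B\pm\varepsilon}$ for the coefficient $B\pm\varepsilon$; a comparison principle gives $u_{B-\varepsilon}<u_{B+\varepsilon}$, and this ordered pair is used as sub/super-solutions for (\ref{Prob B}), producing $\widetilde u$ by monotone iteration; that $\widetilde u\ne u_B$ is then asserted via a degree-theoretic argument on a $C^{4,\alpha}$ order interval. This route recycles the sub/super-solution machinery from earlier in Section~3 and is insensitive to the critical exponent (no weak lower semicontinuity is needed), though the comparison between mountain--pass solutions for different $B$'s and the degree computation are left as black boxes. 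Your argument, where it goes through, gives a transparent energetic reason why the two solutions differ; its weak point is the borderline $q+1=2^\sharp$, where $-\int_M B(u^+)^{2^\sharp}$ is only weakly \emph{upper} semicontinuous, so $E_\varepsilon$ need not attain its infimum on $\overline{B(0,\rho)}$ --- the remedy you suggest (``control through the uniform $\psi$--norm bound, as at the end of the previous proof'') concerns a posteriori bounds on a critical point, not existence of the minimizer, and does not repair this.
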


\begin{proof}
If we take a look at the inequality (\ref{Cond}) we notice that it is open,
that is iw we pertube $B$ a small perturbation, we still get the same
existence result. So let us call $u_{B}$ the solution corresponding to $%
B.$ Then using a comparison principle, we get $u_{B-\varepsilon
}<u_{B+\varepsilon }$ and they are a pair of sub and super-solution to the
problem (\ref{Prob B}), therefore we have the existence of a solution $%
\widetilde{u}$ to the problem, and to guaranty that $u_{B}\neq \widetilde{u}$
we use a degree theory argument since every positive smooth solution is in
the set $A=\left\{ u\in C^{4,\alpha }\left( M\right) ;\text{ }\frac{1}{C}%
<u<C\right\} $ for $C>0$ large enough and uniform.
\end{proof}

\begin{cor}
There exist a constant $C=C(n,M,Q_{\psi })>0$ such that, if $P_{g,\psi }$ is
strongly positive and 
\begin{equation*}
\left( \max B\right) ^{\frac{3n-4}{8}}\int_{M}A<C
\end{equation*}%
the Paneitz-Lichnerowicz Problem admits at least one positive solution.
\end{cor}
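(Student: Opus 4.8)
The plan is to obtain this corollary as a direct specialization of the theorem with hypothesis (\ref{Cond}). Observe first that the Paneitz--Lichnerowicz problem is nothing but problem (\ref{Prob B}) with the critical exponents $p=2^{\sharp}+1$ and $q=2^{\sharp}-1$, so that $p-1=q+1=2^{\sharp}$; this is precisely the borderline regime $p-1=q+1$ that is covered at the end of the proof of that theorem, and, since $P_{g,\psi}$ is assumed strongly positive, it remains only to exhibit one positive test function $\varphi\in H^{2}(M)$ for which (\ref{Cond}) holds under the smallness condition of the statement.

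For these exponents two simplifications occur. First, $q=2^{\sharp}-1$ makes $2^{\sharp}-q-1=0$, so the Lebesgue exponent in (\ref{Cond}) is $\tfrac{2^{\sharp}}{2^{\sharp}-q-1}=\infty$ and hence $\|B\|_{L^{2^{\sharp}/(2^{\sharp}-q-1)}}=\max_{M}B$. Second, I would take $\varphi$ to be a positive constant. By the theorem of Yang quoted above, $\ker P_{0}=\{\text{constants}\}$, so $P_{0}$ annihilates $\varphi$ and $P_{g,\psi}\varphi=a_{n}Q_{\psi}\varphi$; hence
\[
\|\varphi\|_{\psi}^{2}=\int_{M}\varphi\,P_{g,\psi}\varphi=a_{n}\varphi^{2}\int_{M}Q_{\psi},
\]
which is finite and strictly positive, $Q_{\psi}>0$ being part of our standing assumptions. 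Since the left--hand side of (\ref{Cond}) is invariant under $\varphi\mapsto\lambda\varphi$, the part of it depending on $\varphi$ equals
\[
\|\varphi\|_{\psi}^{\,p-1}\int_{M}\frac{A}{\varphi^{p-1}}=\Big(a_{n}\int_{M}Q_{\psi}\Big)^{\frac{p-1}{2}}\int_{M}A ,
\]
which involves the geometry only through $n$, $M$ and $Q_{\psi}$, multiplied by $\int_{M}A$.

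Inserting these two facts into (\ref{Cond})--(\ref{Con}) and gathering into one positive constant $C=C(n,M,Q_{\psi})$ all the quantities that involve neither $A$ nor $B$ --- namely $(a_{n}\int_{M}Q_{\psi})^{(p-1)/2}$, the best Sobolev constant $S_{\psi}$, and the numerical factors of (\ref{Con}) evaluated at $p=2^{\sharp}+1$, $q=2^{\sharp}-1$ --- the hypothesis (\ref{Cond}) becomes, after computing the exponents, an inequality of the form $(\max_{M}B)^{\frac{3n-4}{8}}\int_{M}A<C$, which is exactly the assertion; that theorem then delivers a positive smooth solution. I do not expect a genuine obstacle here --- the argument is essentially bookkeeping --- the only points deserving attention being that the critical value $q=2^{\sharp}-1$ is indeed admissible in the theorem with hypothesis (\ref{Cond}) (it is, being the borderline case $p-1=q+1$ treated there) and that $S_{\psi}$, together with the remaining constants, is controlled once $n$, $M$ and $Q_{\psi}$ are fixed, so that it can legitimately be absorbed into $C$.
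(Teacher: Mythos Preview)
Your approach is exactly the paper's: set $\varphi\equiv 1$ in condition~(\ref{Cond}), observe that for $q=2^{\sharp}-1$ the Lebesgue norm on $B$ degenerates to $\max_{M}B$, and absorb the $\varphi$--dependent quantity $\|1\|_{\psi}^{p-1}=(a_{n}\int_{M}Q_{\psi})^{(p-1)/2}$ together with $S_{\psi}$ and the numerical factors of~(\ref{Con}) into a constant $C(n,M,Q_{\psi})$. The paper's own proof is the one--line remark ``take $\varphi=1$ in~(\ref{Cond}) and use the Sobolev embedding $H^{2}(M)\hookrightarrow L^{2^{\sharp}}(M)$''; you have simply written this out in detail.
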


For the proof of this corollary, we just take $\varphi =1$ in (\ref{Cond})$,$
and the Sobolev embedding 
\begin{equation*}
H^{2}(M)\hookrightarrow L^{2^{\sharp }}\left( M\right) .
\end{equation*}

\section{Non existence Result}

\begin{teo}
\bigskip Assume that $A,B\geq 0$, then if 
\begin{equation*}
\left( \int_{M}A^{\frac{q}{p+q}}B^{\frac{p}{q+p}}\right) ^{\frac{\left(
p+q\right) \left( q-3\right) }{q\left( p+q-2\right) }}\left( 
\begin{array}{c}
\left( \frac{q-1}{p+1}\right) ^{\frac{1-q}{p+q-2}}\left( \int_{M}A^{\frac{q}{%
p+q}}B^{\frac{p}{q+p}}\right) ^{\frac{\left( p+q\right) 2}{q\left(
p+q-2\right) }} \\ 
+\left( \frac{q-1}{p+1}\right) ^{\frac{p+1}{p+q-2}}%
\end{array}%
\right) >\left( \int_{M}\left( Q_{\psi }^{+}\right) ^{\frac{q}{q-1}}B^{-%
\frac{1}{q-1}}\right) ^{\frac{q-1}{q}},
\end{equation*}%
then the problem does not posses any positive smooth solution, where $%
Q_{\psi }=Q-\left\vert \nabla \psi \right\vert _{g}^{2}$
\end{teo}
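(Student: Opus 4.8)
The plan is to argue by contradiction: assume $u>0$ is a smooth solution of (\ref{Prob A}) and show that the displayed inequality is then forced to fail, i.e. that existence implies the reverse (non-strict) inequality.

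I would start by integrating the equation over $M$. The differential part $P_{0}$ is formally self-adjoint and annihilates constants, so $\int_{M}P_{0}u\,dv_{g}=0$, and since $P_{g,\psi}u=P_{0}u+a_{n}Q_{\psi}u$ with $a_{n}=\frac{n-4}{4}$ this yields the key identity
\[
\int_{M}\frac{A}{u^{p}}-\int_{M}Bu^{q}=a_{n}\int_{M}Q_{\psi}u\le a_{n}\int_{M}Q_{\psi}^{+}u .
\]
Write $S=\int_{M}Au^{-p}$, $I=\int_{M}Bu^{q}$, $J=\int_{M}A^{q/(p+q)}B^{p/(p+q)}$. Two applications of H\"older bring in exactly the quantities of the statement. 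First, $A^{q/(p+q)}B^{p/(p+q)}=(Au^{-p})^{q/(p+q)}(Bu^{q})^{p/(p+q)}$ together with the conjugate exponents $\frac{p+q}{q},\frac{p+q}{p}$ gives $J\le S^{q/(p+q)}I^{p/(p+q)}$, hence $S\ge J^{(p+q)/q}I^{-p/q}$. Second, $Q_{\psi}^{+}u=(Q_{\psi}^{+}B^{-1/q})(B^{1/q}u)$ with the exponents $\frac{q}{q-1},q$ gives
\[
\int_{M}Q_{\psi}^{+}u\le\Big(\int_{M}(Q_{\psi}^{+})^{q/(q-1)}B^{-1/(q-1)}\Big)^{(q-1)/q}I^{1/q}=:\Theta\, I^{1/q},
\]
and $\Theta$ is precisely the right-hand side of the displayed inequality (if $B$ vanishes somewhere on $\{Q_{\psi}^{+}>0\}$ then $\Theta=+\infty$ and there is nothing to prove).

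Combining, $J^{(p+q)/q}I^{-p/q}\le S\le I+a_{n}\Theta I^{1/q}$; multiplying by $I^{p/q}$ and setting $x=I^{1/q}>0$ this becomes $J^{(p+q)/q}\le x^{p+q}+a_{n}\Theta x^{p+1}$. To convert this into the stated obstruction I would pair it with a complementary \emph{upper} bound on the scale $x=(\int_{M}Bu^{q})^{1/q}$, obtained by multiplying the equation by $u$ and using the coercivity of $P_{g,\psi}$ (Sobolev inequality) and the maximum principle it satisfies. With $x$ then trapped on both sides, the remaining step is an optimization over $x$ formally dual to the tangent-line criterion (\ref{conv}) used in the existence part: one equates the relevant convex/rational function of $x$ with its tangent through the origin — the same device that produced the constant in (\ref{ineq}) — and after simplifying the weights $\frac{q}{p+q},\frac{p}{p+q}$ one recovers exactly
\[
\Big(\tfrac{q-1}{p+1}\Big)^{\frac{1-q}{p+q-2}}J^{\frac{(p+q)(q-1)}{q(p+q-2)}}+\Big(\tfrac{q-1}{p+1}\Big)^{\frac{p+1}{p+q-2}}J^{\frac{(p+q)(q-3)}{q(p+q-2)}}\le\Theta ,
\]
which contradicts the hypothesis.

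The main obstacle is precisely this bridging step. The integrated identity and H\"older alone only bound $I=\int_{M}Bu^{q}$ from below — small $I$ makes the source term $S$ blow up, which forces $u$ to be "not too small" but says nothing from above — so one genuinely needs an independent control on the size of $u$; the natural candidate is the $H^{2}$ a priori bound from testing against $u$ under coercivity, or, failing that, the Green's-function representation available since $P_{g,\psi}$ is strongly positive. Once that bound is in hand, the rest is careful bookkeeping of exponents (in particular keeping track of the $q-3$ and of the constant $a_{n}$) to land on the precise constant.
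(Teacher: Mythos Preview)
The core idea --- integrate the equation once and apply exactly the two H\"older inequalities you wrote down --- is the paper's approach. Your difficulty is a sign error: the non-existence theorem here concerns problem (\ref{Prob B}), i.e.\ $P_{g,\psi}u=\dfrac{A}{u^{p}}+Bu^{q}$ with $B\ge 0$, not (\ref{Prob A}). Integrating therefore gives
\[
\int_{M}\frac{A}{u^{p}}\;+\;\int_{M}Bu^{q}\;=\;\int_{M}Q_{\psi}\,u ,
\]
that is $S+I\le \Theta\, I^{1/q}$, not $S-I\le\cdots$. With the correct sign, combining this with your lower bound $S\ge J^{(p+q)/q}I^{-p/q}$ and dividing by $I^{1/q}$ produces
\[
I^{(q-1)/q}\;+\;J^{(p+q)/q}\,I^{-(p+1)/q}\;\le\;\Theta .
\]
The left-hand side is a convex function of $I>0$ that blows up at both $0$ and $+\infty$; it has a strictly positive minimum, and the hypothesis of the theorem is precisely that this minimum exceeds $\Theta$. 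That is the whole argument.

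So the ``main obstacle'' you identify --- needing an independent upper bound on $I$ via coercivity, testing against $u$, or the Green's function --- is an artifact of having put $I$ on the wrong side of the identity. Your inequality $J^{(p+q)/q}\le x^{p+q}+a_{n}\Theta\, x^{p+1}$ is indeed unconstraining for large $x$, but it is not the inequality the problem actually delivers. Once the sign is fixed your proof and the paper's coincide; all that remains is the one-variable minimization, and that is exactly the tangent-line computation (\ref{conv}) you invoke, yielding the constants $\bigl(\tfrac{q-1}{p+1}\bigr)^{\frac{1-q}{p+q-2}}$ and $\bigl(\tfrac{q-1}{p+1}\bigr)^{\frac{p+1}{p+q-2}}$ of the statement.
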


\begin{proof}
Let $u$ \ be a positive solution, then the follwong holds :%
\begin{equation*}
\int_{M}\frac{A}{u^{p}}+\int_{M}Bu^{q}=\int_{M}Q_{\psi }u
\end{equation*}%
using the fact that%
\begin{equation*}
\int_{M}Q_{\psi }u\leq \left( \int_{M}\left( Q_{\psi }^{+}\right) ^{\frac{q}{%
q-1}}B^{-\frac{1}{q-1}}\right) ^{\frac{q-1}{q}}\left( \int_{M}Bu^{q}\right)
^{\frac{1}{q}}
\end{equation*}%
Also 
\begin{equation*}
\int_{M}A^{\frac{q}{p+q}}B^{\frac{p}{q+p}}\leq \left( \int_{M}\frac{A}{u^{p}}%
\right) ^{\frac{q}{p+q}}\left( \int_{M}Bu^{q}\right) ^{\frac{p}{q+p}}
\end{equation*}%
therefore if we set $X=\int Bu^{q},$ one gets%
\begin{equation*}
X+\left( \left( \int_{M}A^{\frac{q}{p+q}}B^{\frac{p}{q+p}}\right) \right) ^{%
\frac{p+q}{q}}X^{-\frac{p}{q}}\leq \left( \int_{M}\left( Q_{\psi
}^{+}\right) ^{\frac{q}{q-1}}B^{-\frac{1}{q-1}}\right) ^{\frac{q-1}{q}}X^{%
\frac{1}{q}}
\end{equation*}%
Which is equivalent to say that 
\begin{equation*}
X^{1-\frac{1}{q}}+\left( \int_{M}A^{\frac{q}{p+q}}B^{\frac{p}{q+p}}\right) ^{%
\frac{p+q}{q}}X^{-\frac{p+1}{q}}\leq \left( \int_{M}\left( Q_{\psi
}^{+}\right) ^{\frac{q}{q-1}}B^{-\frac{1}{q-1}}\right) ^{\frac{q-1}{q}}
\end{equation*}%
Therefore if 
\begin{equation*}
\left( \int_{M}A^{\frac{q}{p+q}}B^{\frac{p}{q+p}}\right) ^{\frac{\left(
p+q\right) \left( q-3\right) }{q\left( p+q-2\right) }}\left( 
\begin{array}{c}
\left( \frac{q-1}{p+1}\right) ^{\frac{1-q}{p+q-2}}\left( \int_{M}A^{\frac{q}{%
p+q}}B^{\frac{p}{q+p}}\right) ^{\frac{\left( p+q\right) 2}{q\left(
p+q-2\right) }} \\ 
+\left( \frac{q-1}{p+1}\right) ^{\frac{p+1}{p+q-2}}%
\end{array}%
\right) >\left( \int_{M}\left( Q_{\psi }^{+}\right) ^{\frac{q}{q-1}}B^{-%
\frac{1}{q-1}}\right) ^{\frac{q-1}{q}}
\end{equation*}%
then there is no smooth positive solution to the problem.
\end{proof}

\section{Conclusion}

As a conclusion of the previous existence and non-existence result, we can
set for the sake of simplicity, $A=1,$ $B=\lambda \in \mathbb{R},$ and we get the following corollary if we consider the following problem  
\begin{equation}
P_{g,\psi }u=\frac{1}{u^{p}}+\lambda u^{q}.  \label{Prob conc}
\end{equation}

\begin{cor}
If $P_{g,\psi }$ is strongly positive, then there exist a constant $\lambda
^{\ast }>0$ such that 

i)Problem has no positive smooth solution if $\lambda >\lambda ^{\ast }.$

ii)Problem has at least one positive solution if $\lambda <\lambda ^{\ast }.$

Moreover we have the following estimate 
\begin{equation*}
\left( Vol(M)^{-\frac{2^{\sharp }-q-1}{2^{\sharp }}}C\left( \frac{n-4}{2}%
\int_{M}Q_{\psi }\right) ^{-(p-1)}\right) ^{\frac{q-1}{p+1}}<\lambda ^{\ast
}<Vol(M)^{-\frac{(p+q)(q-1)}{pq+q-2}}\left( \frac{q-1}{p+1}\right) ^{\frac{%
q\left( q-1\right) }{pp+q-2}}\left\Vert Q_{\psi }\right\Vert _{\frac{q}{q-1}%
}^{\frac{q(p+q-2)}{pq+q-2}},
\end{equation*}%
where $C$ is the constant (\ref{Con}).
\end{cor}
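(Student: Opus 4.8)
The plan is to deduce Corollary 5.1 by gluing together the existence statement of Theorem 3.6 (the solvability condition (\ref{Cond}) with the explicit constant (\ref{Con})) and the non-existence statement of Theorem 4.1, using a monotonicity in $\lambda$ of the solvability of (\ref{Prob conc}). Concretely I would set
\[
\Lambda=\{\lambda\in\mathbb{R}:\ (\ref{Prob conc})\ \text{has a positive smooth solution}\},\qquad \lambda^{\ast}=\sup\Lambda .
\]
Two observations pin down $\lambda^{\ast}$. First, $(-\infty,\lambda_{0})\subset\Lambda$ for some $\lambda_{0}>0$: for $\lambda\le0$ the equation is $P_{g,\psi}u=u^{-p}-(-\lambda)u^{q}$ with nonnegative coefficient in front of $u^{q}$, so Corollary 3.4 applies; for $\lambda>0$ small, Theorem 3.6 (equivalently Corollary 3.9) applies with $A\equiv1$, $B\equiv\lambda$. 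Second, $\Lambda$ is bounded above: substituting $A\equiv1$, $B\equiv\lambda$ into Theorem 4.1, the left-hand side of the non-existence inequality is a positive power of $\lambda$ and the right-hand side a negative power of $\lambda$, so no positive solution exists once $\lambda$ is large. Hence $0<\lambda_{0}\le\lambda^{\ast}<\infty$.

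The core of parts (i)--(ii) is the downward monotonicity: if $\lambda'\in\Lambda$ and $\lambda<\lambda'$, then $\lambda\in\Lambda$. Let $u_{\lambda'}>0$ solve the $\lambda'$-problem; since $u_{\lambda'}^{q}\ge0$,
\[
P_{g,\psi}u_{\lambda'}=\frac{1}{u_{\lambda'}^{p}}+\lambda'u_{\lambda'}^{q}\ \ge\ \frac{1}{u_{\lambda'}^{p}}+\lambda u_{\lambda'}^{q},
\]
so $u_{\lambda'}$ is a supersolution of the $\lambda$-problem, and $u_{\lambda'}\ge\delta>0$ on $M$. For a subsolution take $t\varphi_{1}$ with $\varphi_{1}>0$ the first eigenfunction of $P_{g,\psi}$: $P_{g,\psi}(t\varphi_{1})=t\lambda_{1}\varphi_{1}$ is $O(t)$, whereas $(t\varphi_{1})^{-p}+\lambda(t\varphi_{1})^{q}$ blows up like $t^{-p}$, so for $t$ small enough (uniformly on $M$) $t\varphi_{1}$ is a subsolution, and after shrinking $t$ further also $t\varphi_{1}\le u_{\lambda'}$. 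Since $P_{g,\psi}$ is strongly positive it obeys the maximum principle, so the monotone iteration used in the proofs of Theorem 3.2 and Corollary 3.3, run inside the order interval $[t\varphi_{1},u_{\lambda'}]$ (which is uniformly bounded away from $0$, so $u\mapsto u^{-p}$ is Lipschitz there), converges to a positive smooth solution at parameter $\lambda$. Therefore $\Lambda$ equals $(-\infty,\lambda^{\ast})$ or $(-\infty,\lambda^{\ast}]$, which gives (i) ($\lambda>\lambda^{\ast}\Rightarrow\lambda\notin\Lambda$) and (ii) ($\lambda<\lambda^{\ast}\Rightarrow\lambda\in\Lambda$).

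For the two-sided estimate: the upper bound is read off Theorem 4.1 with $A\equiv1$, $B\equiv\lambda$, using $\int_{M}A^{\frac{q}{p+q}}B^{\frac{p}{p+q}}=\lambda^{\frac{p}{p+q}}\operatorname{Vol}(M)$ and $\int_{M}(Q_{\psi}^{+})^{\frac{q}{q-1}}B^{-\frac{1}{q-1}}=\lambda^{-\frac{1}{q-1}}\|Q_{\psi}\|_{\frac{q}{q-1}}^{\frac{q}{q-1}}$; collecting powers of $\lambda$ turns the non-existence hypothesis into $\lambda^{\alpha}>\kappa$ for explicit $\alpha>0$, $\kappa>0$, so no solution exists for $\lambda>\kappa^{1/\alpha}$, and the bookkeeping identifies $\kappa^{1/\alpha}$ with $\operatorname{Vol}(M)^{-\frac{(p+q)(q-1)}{pq+q-2}}\big(\tfrac{q-1}{p+1}\big)^{\frac{q(q-1)}{pq+q-2}}\|Q_{\psi}\|_{\frac{q}{q-1}}^{\frac{q(p+q-2)}{pq+q-2}}$, whence $\lambda^{\ast}$ lies below it. For the lower bound, feed $A\equiv1$, $B\equiv\lambda$ and a constant competitor $\varphi$ into condition (\ref{Cond}) with the explicit $C$ of (\ref{Con}); using $\|1\|_{\psi}^{2}=\int_{M}1\cdot P_{g,\psi}1=a_{n}\int_{M}Q_{\psi}$ and $\int_{M}A\,1^{-(p-1)}=\operatorname{Vol}(M)$, condition (\ref{Cond}) becomes an inequality of the form $\lambda^{\frac{p+1}{q-1}}<(\text{explicit})$; solving and rearranging shows every $\lambda$ below $\big(\operatorname{Vol}(M)^{-\frac{2^{\sharp}-q-1}{2^{\sharp}}}C\,(\tfrac{n-4}{2}\int_{M}Q_{\psi})^{-(p-1)}\big)^{\frac{q-1}{p+1}}$ lies in $\Lambda$ by Theorem 3.6, so $\lambda^{\ast}$ dominates this quantity. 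Combining the two bounds gives the displayed estimate.

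The step I expect to be the real obstacle is the downward-monotonicity argument: one must exhibit a positive subsolution lying beneath the given supersolution and then carry the monotone iteration through \emph{in the presence of} the singular term $u^{-p}$. This is exactly why the iterates are confined to the order interval $[t\varphi_{1},u_{\lambda'}]$, which stays uniformly away from $0$ so that the scheme is well defined and convergent, and why strong positivity of $P_{g,\psi}$ is invoked at each step to compare consecutive iterates — the same device that already produces the uniform lower bounds in the proofs of Theorems 3.2 and 3.6. The remaining work, matching the precise exponents of $\operatorname{Vol}(M)$, $\tfrac{q-1}{p+1}$ and $\|Q_{\psi}\|_{q/(q-1)}$ against (\ref{Con}) and the non-existence inequality, is routine algebra.
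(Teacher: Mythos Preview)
Your proposal is correct and follows the same approach as the paper, which states the corollary as an immediate consequence of specializing the existence condition (\ref{Cond})--(\ref{Con}) (with the test function $\varphi=1$, i.e.\ Corollary~3.9) and the non-existence inequality of Theorem~4.1 to $A\equiv 1$, $B\equiv\lambda$. The only thing you add beyond the paper is the explicit downward-monotonicity argument via sub/supersolutions, which the paper leaves implicit but is indeed what is needed to pass from ``solvable for small $\lambda$, unsolvable for large $\lambda$'' to a single threshold $\lambda^{\ast}$.
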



\begin{thebibliography}{10}
\bibitem[1]{Am} A. Ambrosetti and P. Rabinowitz, Dual variational methods in
critical point theory and applications, J. Funct. Anal. 14, 1973, 349--381

\bibitem[2]{Ben} Ben Ayed, M. ; El Mehdi, K., The Paneitz curvature problem
on lower-dimensional spheres. Ann. Global Anal. Geom. 31 (2007), no. 1,
1--36.

\bibitem[3]{Bra1} T. Branson; Differential operators canonically associated
to a conformal structure, Math.Scand. 57 (1985), 293-345.

\bibitem[4]{Bra2} T.P. Branson and A.R. Gover, Origins, applications and
generalisations of the Q-curvature, Acta Appl. Math. 102 (2008), 131--146.

\bibitem[5]{Car} D. Caraffa, \'{E}quations elliptiques du quatri\`{e}me
ordre avec exposants critiques sur les vari\'{e}t\'{e}s riemanniennes
compactes, J. Math. Pures Appl. (9) 80 (2001), no. 9, 941--960.

\bibitem[5]{Car2} D. Caraffa, \'{E}tude des probl\`{e}mes elliptiques non lin%
\'{e}aires du quatri\`{e}me ordre avec exposants critiques sur les vari\'{e}t%
\'{e}s riemanniennes compactes, J. Math. Pures Appl. (9) 83 (2004), no. 1,
115--136.

\bibitem[6]{Chang} S.Y.A Chang, P.C. Yang, On a fourth order curvature
invariant, Comp. Math. 237, Spectral Problems in Geometry and Arithmetic,
Ed: T. Branson, AMS, 1999, 9-28.

\bibitem[7]{Cho1} Y. Choquet-Bruhat, Results and open problems in
mathematical general relativity. Milan J. Math. 75 (2007), 273--289.

\bibitem[8]{Cho2} Y. Choquet-Bruhat and R. Geroch, Global aspects of the
Cauchy problem in general relativity, Comm. Math. Phys. 14 (1969), 329-335

\bibitem[9]{Cho3} Y. Choquet-Bruhat; J. Isenberg; D. Pollack, Applications
of theorems of Jean Leray to the Einstein-scalar field equations. J. Fixed
Point Theory Appl. 1 (2007), no. 1, 31--46

\bibitem[10]{Cho 4} Y. Choquet-Bruhat; J. Isenberg; D. Pollack, The
constraint equations for the Einstein-scalar field system on compact
manifolds. Classical Quantum Gravity 24 (2007), no. 4, 809--828.

\bibitem[11]{Dru} O. Druet; E. Hebey, Stability and instability for
Einstein-scalar field Lichnerowicz equations on compact Riemannian
manifolds. Math. Z. 263 (2009), no. 1, 33--67

\bibitem[12]{Dja} Z. Djadli, E. Hebey, M. Ledoux, Paneitz type operators and
applications, Duke Math. J. 104, 2000, 129--169

\bibitem[13]{Dja2} D. Djadli; A. Malchiodi, Existence of conformal metrics
with constant \$Q\$-curvature. Ann. of Math. (2) 168 (2008), no. 3, 813--858.

\bibitem[14]{Dja3} Z. Djadli; A. Malchiodi;M. O. Ahmedou, Prescribing a
fourth order conformal invariant on the standard sphere. II. Blow up
analysis and applications. Ann. Sc. Norm. Super. Pisa Cl. Sci. (5) 1 (2002),
no. 2, 387--434.

\bibitem[15]{Heb1} E. Hebey; F. Pacard; D. Pollack ,A variational analysis
of Einstein-scalar field Lichnerowicz equations on compact Riemannian
manifolds. Comm. Math. Phys. 278 (2008), no. 1, 117--132.

\bibitem[16]{Heb2} Hebey, E., and Robert, F., Coercivity and Struwe's
compactness for Paneitz type operators with constant coefficients, Calc.
Var. Partial Differential Equations, 13, 2001, 491-517.

\bibitem[17]{Heb3} E. Hebey; F. Robert; Y. Wen, Compactness and global
estimates for a fourth order equation of critical Sobolev growth arising
from conformal geometry. Commun. Contemp. Math. 8 (2006), no. 1, 9--65.

\bibitem[18]{Lee} J.M. Lee; T .H. Parker, The Yamabe problem. Bull. Amer.
Math. Soc. 17,(1987),no. 1, 37--91

\bibitem[19]{Li} Li Ma, Yuhua Sun, Heat flow method to Lichnerowicz type
equation on closed manifolds

\bibitem[20]{Pan} S. Paneitz; A quartic conformally covariant di erential
operator for arbitrary pseudo-Riemannian manifolds, Preprint, 1983.

\bibitem[21]{Paz} Pazy, A. Semigroups of linear operators and applications
to partial differential equations. Applied Mathematical Sciences, 44.
Springer-Verlag, New York, 1983.

\bibitem[22]{Ras} D. Raske, A Fourth-Order Positivity Preserving Geometric
Flow, arXiv:math/0608146v3.

\bibitem[23]{Ras2} D. Raske, Prescription of Q-curvature on closed
Riemannian manifolds, arXiv:0806.3790v3.

\bibitem[24]{Yang} Xu, X., and Yang, P., Positivity of Paneitz operators,
Discrete and Continuous Dynamical Systems 7, 2001, 329-342.
\end{thebibliography}
\end{document}